\numberwithin{equation}{section}
\numberwithin{table}{section}
\numberwithin{figure}{section}
\newcommand\invisiblepart[1]{%
  \refstepcounter{part}%
  \addcontentsline{toc}{part}{#1}%
}
\let\@cleartopmattertags\relax
\newif\ifen
\newcommand{\en}[1]{\ifen#1\fi} 
\newcommand{\de}[1]{\ifen\else#1\fi} 
\newtheorem{defi}{\en{Definition}\de{Definition}}[section]
\newtheorem{thm}[defi]{\en{Proposition}\de{Satz}}
\newtheorem{bem}[defi]{\en{Remark}\de{Bemerkung}}
 	\definecolor{lightergray}{rgb}{0.9, 0.9, 0.9}
 	\definecolor{darkergray}{rgb}{0.4, 0.4, 0.4}
\newmdtheoremenv[innertopmargin=0pt, backgroundcolor=lightergray, linecolor=gray, linewidth=1pt, topline=false, bottomline=false]{theo}[defi]{\en{Theorem}\de{Theorem}}
  \def\section{\@startsection{section}{1}%
    \z@{.7\linespacing\@plus\linespacing}{.6\linespacing}%
    {\Large\normalfont\scshape\bfseries\centering}}
\newcommand{\ko}{\en{.}\de{{,}}} 
\newcommand{\AGM}{\operatorname{AGM}}
\newcommand\varpm{\mathbin{\vcenter{\hbox{
  \oalign{\hfil$\scriptstyle+$\hfil\cr
          \noalign{\kern-.3ex}
          $\scriptscriptstyle({-})$\cr}%
}}}}
\newcommand{\re}{\operatorname{Re}} 
\newcommand{\kommentInh}[2]{{\noindent\textbf{\ref{\en{EN}#1}.~~~\nameref{\en{EN}#1}~~\dotfill~~\pageref{\en{EN}#1}}\par\vspace*{1.5pt}
                            \noindent\narrower\narrower{\itshape #2 }\par\vspace*{7pt}}}
\def\lk{\mathopen{}\left(}
\def\rk{\right)\mathclose{}}
\renewcommand*{\ALG@name}{Algorithm\de{us}}
\begin{document}
  \selectlanguage{ngerman}

    \title{\Large{Easy Proof of Three Recursive $\pi$-Algorithms}\\[2ex]
        \hrule~\\[1.5ex]
        \Large{Einfacher Beweis dreier rekursiver $\pi$-Algorithmen}}
    \author{Lorenz Milla, \en{June}\de{Juni} 2025}

  \entrue
  \invisiblepart{English version}
  \fancyhead[OL]{\emph{\thesection.~\leftmark}}
\selectlanguage{ngerman}

\maketitle

\thispagestyle{empty}
\vspace*{2.25cm}
\label{\en{EN}titlepage}

{\narrower{
\noindent{\scshape Abstract.} This paper consists of three independent parts:

First we use only elementary algebra to prove that the quartic algorithm of the Borwein brothers has exactly the same output as the Brent-Salamin algorithm, but that the latter needs twice as many iterations.

Second we use integral calculus to prove that the Brent-Salamin algorithm approximates $\pi$.
Combining these results proves that the Borwein brothers' quartic algorithm also approximates $\pi$.

Third, we prove the quadratic convergence of the Brent-Salamin algorithm, which also proves the quartic convergence of Borwein's algorithm.\\
\vspace*{2mm}
{\begin{center}\emph{English version: pp.~\pageref{ENtitlepage}--\pageref{ENliterat}}\end{center}}

\vspace*{1.5cm}

\noindent{\scshape Zusammenfassung.} 
    Dieses Paper besteht aus drei unabhängigen Teilen:
    
    Erstens beweisen wir mit elementarer Algebra, dass der Borwein-Algorithmus vierter Ordnung die gleichen Ergebnisse liefert wie der Brent-Salamin-Algo"-rithmus, wobei letzterer doppelt so viele Iterationen benötigt.
    
    Zweitens beweisen wir mit Integralrechnung, dass der Brent-Salamin-Algo-\linebreak rithmus gegen $\pi$ konvergiert.
    Hieraus folgt, dass der Borwein-Algorithmus vierter Ordnung ebenfalls gegen $\pi$ konvergiert.
    
    Drittens beweisen wir die quadratische Konvergenz des Brent-Salamin-Algo-\linebreak rithmus und somit auch die quartische Konvergenz des Borwein-Algorithmus.\\
\vspace*{2mm}
{\begin{center}\emph{Deutsche Version: S.~\pageref{titlepage}--\pageref{literat}}\end{center}}

~
}}

\vfill\pagebreak\renewcommand{\proofname}{\en{Proof}\de{Beweis}}
\section*{\en{Introduction: The Algorithms}\de{Einleitung: Die Algorithmen}}
\label{\en{EN}kapeinl}
\renewcommand{\leftmark}{\en{Introduction: The Algorithms}\de{Einleitung: Die Algorithmen}}
\noindent \en{This paper is about the following three recursive $\pi$-algorithms:}%
\de{Dieses Paper handelt von den folgenden drei rekursiven $\pi$-Algorithmen:}

\begin{algorithm}[H]
\caption{\en{(Brent \& Salamin) (or: Gauß \& Legendre) (or: ``AGM Iteration'')}%
\de{(Brent \& Salamin) (oder: Gauß \& Legendre) (oder: \glqq AGM-Iteration\grqq)}}\label{\en{EN}algBS}
\begin{align*}
    &\left\{
    \begin{aligned}
        a_0 &:= 1\\ b_0 &:= \frac 1 {\sqrt{2}}
    \end{aligned}
    \right\}
    ~~ \text{\en{and}\de{und}} ~~
    \left\{
    \begin{aligned}
     a_{n} &:= \frac{a_{n-1}+b_{n-1}}{2} && \text{(\en{arithmetic mean}\de{arithmetisches Mittel})}\\
     b_{n} &:= \sqrt{a_{n-1}\cdot b_{n-1}} && \text{(\en{geometic mean}\de{geometrisches Mittel})}\\
     c_{n}^2 &:= a_{n}^2-b_{n}^2 && 
    \end{aligned}
    \right\}\\
    &\Longrightarrow~\text{\en{output after $N$ iterations:}\de{Ausgabe nach $N$ Iterationen:}} ~~~ p_N := \frac{(a_N+b_N)^2}{1-2\cdot\sum_{j=1}^N 2^j\cdot c_j^2}
\end{align*}
\end{algorithm}

\vspace*{-2mm}
\begin{algorithm}[H]
\caption{\en{(Borwein \& Borwein, quadratic convergence)}\de{(Borwein \& Borwein, quadratische Konvergenz)}}\label{\en{EN}algBB2}
\begin{align*}
    &\left\{
    \begin{aligned}
        k_{0} &:= 3-2\cdot{\sqrt {2}}\\ e_{0} &:= 6-4\cdot{\sqrt {2}}
    \end{aligned}
    \right\}
    ~~ \text{\en{and}\de{und}} ~~
    \left\{
    \begin{aligned}
     k_n&:=\frac{1-\sqrt{1-k_{n-1}^{2}}}{1+\sqrt{1-k_{n-1}^{2}}}\\
     e_n&:=e_{n-1}\cdot(1+k_{n})^{2}- 2^{n+1}\cdot k_n
    \end{aligned}
    \right\}\\
    &\Longrightarrow~\text{\en{output after $N$ iterations:}\de{Ausgabe nach $N$ Iterationen:}} ~~~ \widehat{\pi}_N := \frac{1}{e_N}
\end{align*}
\end{algorithm}

\vspace*{-2mm}
\begin{algorithm}[H]
\caption{\en{(Borwein \& Borwein, fourth order convergence)}\de{(Borwein \& Borwein, Konvergenz vierter Ordnung)}}\label{\en{EN}algBB4}
\begin{align*}
    &\left\{
    \begin{aligned}
        y_{0}&:={\sqrt {2}}-1\\z_{0}&:=6-4\cdot{\sqrt {2}}
    \end{aligned}
    \right\}
    ~~ \text{\en{and}\de{und}} ~~
    \left\{
    \begin{aligned}
     y_n&:=\frac{1-\sqrt[4]{1-y_{n-1}^{4}}}{1+\sqrt[4]{1-y_{n-1}^{4}}}\\
     z_n&:=z_{n-1}\cdot(1+y_{n})^{4}-2\cdot 4^n\cdot y_{n}\cdot(1+y_{n}+y_{n}^{2})
    \end{aligned}
    \right\}\\
    &\Longrightarrow~\text{\en{output after $N$ iterations:}\de{Ausgabe nach $N$ Iterationen:}} ~~~ \pi_N := \frac{1}{z_N}
\end{align*}
\end{algorithm}

\en{We prove that these three algorithms produce the same approximations of $\pi$, where the number of correct digits is being \emph{doubled} or \emph{quadrupled} with each iteration.}%
\de{Wir beweisen, dass diese drei Algorithmen die gleichen Näherungen der Zahl $\pi$ berechnen, wobei die Anzahl korrekter $\pi$-Dezimalen mit jeder Iteration ungefähr \emph{verdoppelt} bzw. \emph{vervierfacht} wird.}

\en{These results have been proven before, but we elaborate all intermediate calculations and we use only elementary algebra and integral calculus.}%
\de{Diese Resultate wurden bereits anderswo bewiesen, aber wir führen alle Rechnungen explizit aus und wir verwenden nur elementare Algebra und Integralrechnung.}

\en{Our proof consists of three independent chapters:}%
\de{Unser Beweis besteht aus drei unabhängigen Kapiteln:}\par\vspace*{8pt}

\kommentInh{kapborw}{\en{We prove that the algorithms have the same outputs: $\widehat{\pi}_N=p_N$ and $\pi_N=p_{2N}$.}%
\de{Wir beweisen, dass die drei Algorithmen die gleiche Ausgabe liefern, also dass $\widehat{\pi}_N=p_N$ und $\pi_N=p_{2N}$ gilt.}}

\kommentInh{gausskap}{\en{We prove that the output $p_N$ of Brent-Salamin converges to $\pi$ as $N\rightarrow\infty$.}%
\de{Wir beweisen, dass die Ausgabe $p_N$ des Brent-Salamin-Algorithmus gegen $\pi$ konvergiert.}}

\kommentInh{kapkonv}{\en{We prove the quadratic convergence of $p_N$:
~$|\pi-p_{n+1}| < 0\ko075 \cdot |\pi-p_{n}|^2$.\linebreak Here $\pi$ denotes the limit of $p_N$.}%
\de{Wir beweisen, dass $p_N$ quadratisch konvergiert: $|\pi-p_{n+1}| < 0\ko075 \cdot |\pi-p_{n}|^2$. Hierbei bezeichnet $\pi$ den Grenzwert von $p_N$.}}\vspace*{-7pt}

\vfill\pagebreak
\section{\en{Proof of Equivalence of the Algorithms}\de{Beweis der Äquivalenz der Algorithmen}}\label{\en{EN}kapborw}
\renewcommand{\leftmark}{\en{Proof of Equivalence of the Algorithms}\de{Beweis der Äquivalenz der Algorithmen}}
\en{We call two algorithms ``equivalent'' if they produce the same outputs. In this chapter we will prove that the three algorithms on p.~\pageref{\en{EN}algBB2} are equivalent. More precisely:}%
\de{Zwei Algorithmen, die exakt die gleichen Ergebnisse ausgeben, nennen wir äquivalent.
Wir werden beweisen, dass die drei Algorithmen von S. \pageref{\en{EN}algBS} äquivalent sind. Genauer:}\\

\begin{theo}\label{\en{EN}theoequi}
\en{For the outputs of the three algorithms on p.~\pageref{\en{EN}algBS}, where
\begin{itemize}[leftmargin=*]
\item $p_N$ is the output of the Brent-Salamin Alg.~\ref{\en{EN}algBS},
\item $\widehat{\pi}_N$ is the output of the Borweins' quadratic Alg.~\ref{\en{EN}algBB2},
\item $\pi_N$ is the output of the Borweins' fourth order Alg.~\ref{\en{EN}algBB4},
\end{itemize}
it holds:
$$\widehat{\pi}_N=p_N\qquad\text{\en{and}\de{und}}\qquad \pi_N=\widehat{\pi}_{2N}=p_{2N}$$
thus these algorithms produce the same sequence of outputs if the outputs are calculated exactly.}%
\de{Für die Ausgaben der drei Algorithmen auf S.~\pageref{\en{EN}algBS}, wobei
\begin{itemize}[leftmargin=*]
\item $p_N$ die Ausgabe des Brent-Salamin Alg.~\ref{\en{EN}algBS} bezeichne,
\item $\widehat{\pi}_N$ die Ausgabe des quadratischen Borwein-Alg.~\ref{\en{EN}algBB2} bezeichne und
\item $\pi_N$ die Ausgabe des quartischen Borwein-Alg.~\ref{\en{EN}algBB4} bezeichne,
\end{itemize}
gilt:
$$\widehat{\pi}_N=p_N\qquad\text{\en{and}\de{und}}\qquad \pi_N=\widehat{\pi}_{2N}=p_{2N}$$
d.h. dass diese Algorithmen genau die gleichen Ergebnisse liefern, falls die Ergebnisse exakt berechnet werden.}
\end{theo}
\begin{proof}\renewcommand{\qedsymbol}{}
\en{This has been proven by Brent \cite{\en{EN}Brent2018} who used elliptic modular functions and by Guillera \cite{\en{EN}Guillera} who used a theorem of Gauss, but we will need only elementary algebra for the proof of $\widehat{\pi}_N=p_N$ in Prop.~\ref{\en{EN}equi1} and for the proof of $\pi_N=\widehat{\pi}_{2N}$ in Prop.~\ref{\en{EN}equi2}.}%
\de{Das wurde bereits von Brent \cite{\en{EN}Brent2018} mit Hilfe elliptischer Modulfunktionen bewiesen und von Guillera \cite{\en{EN}Guillera}, der eine Formel von Gauss verwendet.
Wir benötigen nur elementare Algebra für den Beweis von $\widehat{\pi}_N=p_N$ in Satz \ref{\en{EN}equi1} und von $\pi_N=\widehat{\pi}_{2N}$ in Satz \ref{\en{EN}equi2}.}
\end{proof}

\begin{bem}
\en{When the computations are done using floating-point or interval arithmetic, the initial values and the iterations can only be done with finite precision. This produces rounding errors which propagate differently in the three algorithms. The outputs of the algorithms thus differ in the last decimals.
To compute $D$ decimals of $\pi$ correctly, one has to compute all initial and intermediate values to some extended precision (e.g. to $D+50$ decimals), and the additional decimals have to be cut off in the end.}%
\de{Bei einer tatsächlichen Implementierung der Algorithmen mit Hilfe von Gleit"-komma- oder Intervallarithmetik kann man die Startwerte und die Iterationen nur mit einer endlichen Genauigkeit berechnen. Hier entstehen Rundungsfehler, die sich bei den verschiedenen Algorithmen unterschiedlich fortpflanzen. Die Ausgaben der Algorithmen unterscheiden sich also in den letzten  Dezimalen.
Um $D$ Dezimalen von $\pi$ korrekt zu berechnen, muss man von Anfang an alle Zwischenergebnisse auf einige zusätzliche Dezimalen berechnen (z.B. auf $D+50$ Dezimalen), die man am Ende wieder abschneidet.}
\end{bem}

\begin{thm}\label{\en{EN}equi1}
\en{For the sequences defined in the Brent-Salamin Alg.~\ref{\en{EN}algBS} and the Borweins' Alg.~\ref{\en{EN}algBB2} on p.~\pageref{\en{EN}algBS} it holds $e_n=1/p_n$ and $k_n=a_n/a_{n+1}-1$.
In particular it holds $$\widehat{\pi}_N=p_N$$ thus these two algorithms produce the same sequence of outputs.}%
\de{Für die Größen des Brent-Salamin-Algorithmus \ref{\en{EN}algBS} und des Borwein-Alg.~\ref{\en{EN}algBB2} auf S.~\pageref{\en{EN}algBS} gilt $e_n=1/p_n$ und $k_n=a_n/a_{n+1}-1$.
Insbesondere gilt $$\widehat{\pi}_N=p_N$$ d.h. diese beiden Algorithmen liefern genau die gleichen Ergebnisse.}
\end{thm}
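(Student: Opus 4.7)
The plan is to prove the two identities
$$k_n = \frac{a_n}{a_{n+1}} - 1 \qquad \text{and} \qquad e_n = \frac{1}{p_n}$$
simultaneously by induction on $n$, from which $\widehat{\pi}_N = 1/e_N = p_N$ is immediate. The base case $n=0$ is a direct computation from the initial values: one verifies $a_0/a_1 - 1 = 3 - 2\sqrt{2} = k_0$ and $1/p_0 = 1/(1+1/\sqrt{2})^2 = 6 - 4\sqrt{2} = e_0$.

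For the inductive step on $k$, I would first rewrite the hypothesis in the more useful symmetric form
$$k_n = \frac{a_n - b_n}{a_n + b_n},$$
which is immediate from $a_{n+1} = (a_n + b_n)/2$. With this form the inner square root collapses:
$$1 - k_n^2 = \frac{4 a_n b_n}{(a_n + b_n)^2}, \qquad \sqrt{1 - k_n^2} = \frac{b_{n+1}}{a_{n+1}}.$$
Substituting into the Borwein recursion yields $k_{n+1} = (a_{n+1} - b_{n+1})/(a_{n+1} + b_{n+1})$, which is exactly $a_{n+1}/a_{n+2} - 1$ by the same rewriting one level up.

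For the $e$-identity, I would use $1 + k_{n+1} = a_{n+1}/a_{n+2}$ together with $e_n = 1/p_n$ and the identity $p_n = 4 a_{n+1}^2 / \bigl(1 - 2 \sum_{j=1}^{n} 2^j c_j^2\bigr)$ to bring both $e_n(1+k_{n+1})^2$ and $1/p_{n+1}$ over the common denominator $4 a_{n+2}^2$. The telescoping sum in the numerator leaves only the single term $2 \cdot 2^{n+1} c_{n+1}^2$, so the desired recursion $e_{n+1} = e_n(1+k_{n+1})^2 - 2^{n+2} k_{n+1} = 1/p_{n+1}$ reduces to the one identity
$$\frac{c_{n+1}^2}{4 a_{n+2}^2} = k_{n+1},$$
which follows at once from $c_{n+1}^2 = (a_{n+1} - b_{n+1})(a_{n+1} + b_{n+1})$ and $4 a_{n+2}^2 = (a_{n+1} + b_{n+1})^2$.

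The only real obstacle is a matter of recognition rather than calculation: one must spot that the Borwein step $k \mapsto (1-\sqrt{1-k^2})/(1+\sqrt{1-k^2})$, viewed on the symmetric ratio $(a_n-b_n)/(a_n+b_n)$, is precisely the operation that advances $n$ to $n+1$ in the AGM iteration. Once this correspondence is in place, the remaining algebra is routine.
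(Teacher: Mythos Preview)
Your proposal is correct and follows essentially the same route as the paper: both argue by induction, pass to the symmetric form $k_n = (a_n-b_n)/(a_n+b_n)$ to collapse $\sqrt{1-k_n^2}$ into $b_{n+1}/a_{n+1}$, and then reduce the $e$-step to the single identity $c_{n+1}^2/(4a_{n+2}^2) = k_{n+1}$ (equivalently $c_n^2/a_{n+1}^2 = 4k_n$). The only differences are cosmetic---your index shift is $n\to n+1$ rather than $n-1\to n$, and you organize the $e$-computation as a direct comparison of $e_n(1+k_{n+1})^2$ with $1/p_{n+1}$ rather than via the telescoping difference $a_{n+1}^2 E_n - a_n^2 E_{n-1}$---but the key identities and the overall structure are identical.
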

\begin{proof}
\en{We set $E_n:=1/p_n$ and $K_n:=a_n/a_{n+1}-1$ and prove by induction that it holds $E_n=e_n$ and $K_n=k_n$:}%
\de{Wir setzen $E_n:=1/p_n$ und $K_n:=a_n/a_{n+1}-1$ und beweisen dann per vollständiger Induktion, dass $E_n=e_n$ und $K_n=k_n$ gilt:}
\begin{itemize}[leftmargin=*]
    \item \en{First we prove $K_0=k_0$ and $E_0=e_0$:}\de{Für den Induktionsanfang beweisen wir $K_0=k_0$ und $E_0=e_0$:}
\begin{align*}
    K_0&:=\frac{a_0}{a_1}-1=\frac{1}{(1+1/\sqrt{2})/2}-1=\frac{4}{2+\sqrt{2}}-1=\frac{4\cdot(2-\sqrt{2})}{4-2}-1\\
    &=2\cdot(2-\sqrt{2})-1=3-2\cdot\sqrt{2}=k_0\\
    E_0&:=\frac{1}{p_0} = \frac{1-2\sum_{j=1}^{0} 2^j\cdot c_j^2}{(a_0+b_0)^2} = \frac{1}{\left(1+1/\sqrt{2}\right)^2} = \frac{1}{1+\sqrt{2} + 1/2}\\
    &= \frac{2}{3+2\cdot\sqrt{2}}=\frac{2\cdot(3-2\cdot\sqrt{2})}{9-4\cdot 2} = 2\cdot(3-2\cdot\sqrt{2}) = e_0
\end{align*}
\item \en{Now we prove $K_n=k_n$ and $E_n=e_n$ using the induction hypothesis (which states $K_{n-1}=k_{n-1}$ and $E_{n-1}=e_{n-1}$):}%
\de{Beweise jetzt unter Verwendung der Induktionsvoraussetzungen ($K_{n-1}=k_{n-1}$ und $E_{n-1}=e_{n-1}$), dass $K_n=k_n$ und $E_n=e_n$ gilt:}
\begin{align*}
    K_n &:=\frac{a_n}{a_{n+1}}-1 = \frac{a_n-(a_n+b_n)/2}{(a_n+b_n)/2}=\frac{a_n-b_n}{a_n+b_n}\nonumber\\
    \Longrightarrow\quad K_n^2 &=\left(\frac{a_n-b_n}{a_n+b_n}\right)^2 = \frac{(a_n+b_n)^2-4a_nb_n}{(a_n+b_n)^2} = 1 - \frac{b_{n+1}^2}{a_{n+1}^2}\nonumber\\
    \Longrightarrow\quad \sqrt{1-K_{n-1}^2} &= \sqrt{1-\left(1 - \frac{b_{n}^2}{a_{n}^2}\right)}=\sqrt{b_n^2/a_n^2}=b_n/a_n
\end{align*}
\en{Using the induction hypothesis this yields $K_n=k_n$:}%
\de{Hieraus folgt nun $K_n=k_n$ aufgrund der Induktionsvoraussetzung:}
$$K_n =\frac{a_n-b_n}{a_n+b_n} =\frac{1-b_n/a_n}{1+b_n/a_n} = \frac{1-\sqrt{1-K_{n-1}^2}}{1+\sqrt{1-K_{n-1}^2}} = \frac{1-\sqrt{1-k_{n-1}^2}}{1+\sqrt{1-k_{n-1}^2}} = k_n$$
\en{From}\de{Schließlich gilt} $p_n :=\frac{(a_n+b_n)^2}{1-2\cdot\sum_{j=1}^n 2^j c_j^2}= \frac{4\cdot a_{n+1}^2}{1-2\cdot\sum_{j=1}^n 2^j c_j^2}$ \en{we obtain}\de{und somit}
$$E_n := \frac{1}{p_n} = \frac{1-2\cdot\sum_{j=1}^n 2^j c_j^2}{4\cdot a_{n+1}^2}$$
\en{This yields}\de{Das liefert}
\begin{align*}
    a_{n+1}^2\cdot E_n -a_n^2\cdot E_{n-1} &= \left(\frac 1 4 -\frac 2 4 \cdot\sum_{j=1}^n 2^j c_j^2\right) - \left(\frac 1 4-\frac 2 4 \cdot\sum_{j=1}^{n-1} 2^j c_j^2\right)=-2^{n-1}\cdot c_n^2\\
    \Longrightarrow\quad E_n &= \frac{a_n^2}{a_{n+1}^2}\cdot E_{n-1} - 2^{n-1}\cdot \frac{c_n^2}{a_{n+1}^2}
\end{align*}
\en{Using}\de{Mit} $\frac{c_n^2}{a_{n+1}^2}=\frac{a_n^2-b_n^2}{(a_n^2+b_n)^2/4}=4\cdot\frac{a_n-b_n}{a_n+b_n}=4\cdot\left(\frac{2a_n}{a_n+b_n}-\frac{a_n+b_n}{a_n+b_n}\right)=4\cdot\left(\frac{a_n}{a_{n+1}}-1\right)$ \en{we get}\de{folgt}:
\begin{align*}
    E_n &= \left(\frac{a_n}{a_{n+1}}\right)^2\cdot E_{n-1} - 2^{n+1}\cdot\left(\frac{a_n}{a_{n+1}}-1\right)
\end{align*}
\en{Here we replace $a_n/a_{n+1}$ by $K_n+1$ and obtain}%
\de{Hier ersetzen wir $a_n/a_{n+1}$ durch $K_n+1$ und erhalten}
$$E_n = \left(K_n+1\right)^2\cdot E_{n-1} - 2^{n+1}\cdot K_n$$
\en{But we already proved $K_n=k_n$. Thus the induction hypothesis $E_{n-1}=e_{n-1}$ implies:}%
\de{Aber wir haben bereits $K_n=k_n$ bewiesen und nach Induktionsvoraussetzung gilt $E_{n-1}=e_{n-1}$:}
$$\Longrightarrow\quad E_n = \left(k_n+1\right)^2\cdot e_{n-1} - 2^{n+1}\cdot k_n$$
\en{Here we recognize the definition of $e_n$, thus we have proven $E_n=e_n$.}%
\de{Hier erkennen wir die Definition von $e_n$, also ist auch $E_n=e_n$ bewiesen.}
\end{itemize}
\en{This proves $\widehat{\pi}_N=1/e_N=p_N$ for all $N\in\mathbb N$, thus the two algorithms produce the same sequence of outputs.}%
\de{Somit haben wir für alle $N\geq 0$ bewiesen, dass $\widehat{\pi}_N=1/e_N=p_N$ gilt, d.h. dass die beiden Algorithmen genau die gleichen Ergebnisse liefern.}
\end{proof}

\begin{thm}\label{\en{EN}equi2}
\en{For the sequences defined in the Borweins' Alg.~\ref{\en{EN}algBB2} and \ref{\en{EN}algBB4} on p.~\pageref{\en{EN}algBB2} it holds $y_n=\sqrt{k_{2n}}$ and $z_n=e_{2n}$.
In particular it holds $$\pi_N=\widehat{\pi}_{2N}$$ thus one iteration of Alg.~\ref{\en{EN}algBB4} is equivalent to two iterations of Alg.~\ref{\en{EN}algBB2}.}%
\de{Für die Größen der Algorithmen \ref{\en{EN}algBB2} und \ref{\en{EN}algBB4} auf Seite \pageref{\en{EN}algBB2} gilt $y_n=\sqrt{k_{2n}}$ und $z_n=e_{2n}$.
Insbesondere gilt $$\pi_N=\widehat{\pi}_{2N}$$ d.h. eine Iteration des Algorithmus \ref{\en{EN}algBB4} entspricht genau zwei Iterationen des Algorithmus \ref{\en{EN}algBB2}.}
\end{thm}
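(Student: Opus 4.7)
The plan is to prove both identities $y_n=\sqrt{k_{2n}}$ and $z_n=e_{2n}$ simultaneously by induction on $n$, guided by the structural observation that one step of the quartic recursion for $(y,z)$ should correspond to two consecutive steps of the quadratic recursion for $(k,e)$ under the substitution $y=\sqrt{k}$. The base case is immediate: $(\sqrt{2}-1)^2=3-2\sqrt{2}$ gives $\sqrt{k_0}=\sqrt{2}-1=y_0$, while $z_0=e_0=6-4\sqrt{2}$ holds by inspection.

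For the induction step, assuming $y_{n-1}=\sqrt{k_{2n-2}}$ and $z_{n-1}=e_{2n-2}$, I would introduce the abbreviation $s:=\sqrt{1-k_{2n-2}^2}$, so that $\sqrt[4]{1-y_{n-1}^{4}}=\sqrt{s}$, and express each of $k_{2n-1}$, $\sqrt{1-k_{2n-1}^2}$, $k_{2n}$ and $y_n$ as an explicit rational function of $s$ and $\sqrt{s}$. A short direct calculation yields
\begin{align*}
k_{2n-1}=\tfrac{1-s}{1+s},\qquad \sqrt{1-k_{2n-1}^2}=\tfrac{2\sqrt{s}}{1+s},\qquad k_{2n}=\tfrac{(1-\sqrt{s})^2}{(1+\sqrt{s})^2},\qquad y_n=\tfrac{1-\sqrt{s}}{1+\sqrt{s}},
\end{align*}
from which $y_n=\sqrt{k_{2n}}$ is immediate.

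To handle the $z$-part, I would unroll the quadratic $e$-recursion twice to write
\begin{align*}
e_{2n} = e_{2n-2}(1+k_{2n-1})^2(1+k_{2n})^2 - 2^{2n}\,k_{2n-1}(1+k_{2n})^2 - 2^{2n+1}\,k_{2n},
\end{align*}
and compare term by term with the definition of $z_n$, using the induction hypothesis $z_{n-1}=e_{2n-2}$. The desired identity $z_n=e_{2n}$ then reduces to two algebraic facts: first $(1+k_{2n-1})(1+k_{2n})=(1+y_n)^2$, which matches the $(1+y_n)^4$-factor; and second $k_{2n-1}(1+k_{2n})^2+2k_{2n}=2y_n(1+y_n+y_n^2)$, which absorbs the two correction terms after noting $2^{2n+1}=2\cdot 4^n$. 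The second identity drops out cleanly from the auxiliary relation $k_{2n-1}(1+k_{2n})=2\sqrt{k_{2n}}=2y_n$, and all three relations are straightforward consequences of the explicit $s$-parametrization above.

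I expect the main obstacle to be sheer bookkeeping rather than any conceptual difficulty: keeping the powers $2^{2n}$ and $2\cdot 4^n$ aligned, and verifying that the positive branch of each nested root is the correct one (one needs $k_m\in(0,1)$ for all $m\geq 0$, which follows from the fact that $k\mapsto(1-\sqrt{1-k^2})/(1+\sqrt{1-k^2})$ sends $(0,1)$ into itself, starting from $k_0=3-2\sqrt 2\in(0,1)$). Once the $s$-parametrization is introduced, the algebra collapses to the three short identities above.
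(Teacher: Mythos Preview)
Your proposal is correct and follows essentially the same induction scheme as the paper: verify the base case, establish $y_n=\sqrt{k_{2n}}$ via the composed $k$-recursion, then unroll the $e$-recursion twice and match it against the $z$-recursion. The only cosmetic difference is the choice of parameter: the paper inverts the $k$-recursion to get $k_{2n-1}=\tfrac{2y_n}{1+y_n^2}$ and expresses everything in terms of $y_n$, whereas you push forward with $s=\sqrt{1-k_{2n-2}^2}$ and express everything in terms of $\sqrt{s}$; the resulting key identities $(1+k_{2n-1})(1+k_{2n})=(1+y_n)^2$ and $k_{2n-1}(1+k_{2n})^2+2k_{2n}=2y_n(1+y_n+y_n^2)$ are identical in both treatments.
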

\begin{proof}
\en{We set $Y_n:=\sqrt{k_{2n}}$ and $Z_n:=e_{2n}$ and prove by induction that it holds $Y_n=y_n$ and $Z_n=z_n$:}%
\de{Wir setzen $Y_n:=\sqrt{k_{2n}}$ und $Z_n:=e_{2n}$ und beweisen dann per vollständiger Induktion, dass $Y_n=y_n$ und $Z_n=z_n$ gilt:}
\begin{itemize}[leftmargin=*]
    \item \en{First we observe that $Z_0:=e_0=6-4\cdot\sqrt{2}=z_0$. Then it holds $Y_0:=\sqrt{k_0}=y_0$, because $y_0^2=\left(\sqrt{2}-1\right)^2=3-2\cdot\sqrt{2}=k_0$.}\de{Für den Induktionsanfang erkennen wir zunächst $Z_0:=e_0=6-4\cdot\sqrt{2}=z_0$. Außerdem gilt $y_0^2=\left(\sqrt{2}-1\right)^2=3-2\cdot\sqrt{2}=k_0$, also folgt $Y_0:=\sqrt{k_0}=y_0$.}
    \item \en{Now we prove $Y_n=y_n$ and $Z_n=z_n$ using the induction hypothesis (which states $Y_{n-1}=y_{n-1}$ and $Z_{n-1}=z_{n-1}$):\\
    From $k_{n}:=\frac{1-\sqrt{1-k_{n-1}^2}}{1+\sqrt{1-k_{n-1}^2}}$ we get $\sqrt{1-k_{n-1}^2}=\frac{1-k_{n}}{1+k_{n}}$ and thus}%
    \de{Beweise jetzt unter Verwendung der Induktionsvoraussetzungen ($Y_{n-1}=y_{n-1}$ und $Z_{n-1}=z_{n-1}$), dass $Y_n=y_n$ und $Z_n=z_n$ gilt:\\
    Aus $k_{n}:=\frac{1-\sqrt{1-k_{n-1}^2}}{1+\sqrt{1-k_{n-1}^2}}$ folgt $\sqrt{1-k_{n-1}^2}=\frac{1-k_{n}}{1+k_{n}}$ und somit}
    $$k_{n-1} = \sqrt{1-\left(\frac{1-k_{n}}{1+k_{n}}\right)^2}=\sqrt{\frac{(1+k_{n})^2-(1-k_{n})^2}{(1+k_{n})^2}}=\frac{2\cdot\sqrt{k_{n}}}{1+k_{n}}$$
    \en{Next, $Y_n:=\sqrt{k_{2n}}$ yields $k_{2n}=Y_n^2$ and $k_{2n-1}=\frac{2\cdot\sqrt{k_{2n}}}{1+k_{2n}}=\frac{2\cdot Y_n}{1+Y_n^2}$. This implies:}%
    \de{Aus $Y_n:=\sqrt{k_{2n}}$ folgt $k_{2n}=Y_n^2$ und $k_{2n-1}=\frac{2\cdot\sqrt{k_{2n}}}{1+k_{2n}}=\frac{2\cdot Y_n}{1+Y_n^2}$.
    Hieraus folgt:}
    \begin{align*}
        \frac{2\cdot Y_n}{1+Y_n^2} &= k_{2n-1} = \frac{1-\sqrt{1-k_{2n-2}^2}}{1+\sqrt{1-k_{2n-2}^2}} = \frac{1-\sqrt{1-Y_{n-1}^4}}{1+\sqrt{1-Y_{n-1}^4}}\\
        \Longrightarrow\quad \sqrt{1-Y_{n-1}^4} &= \frac{1-\frac{2\cdot Y_n}{1+Y_n^2}}{1+\frac{2\cdot Y_n}{1+Y_n^2}}
        = \frac{1+Y_n^2-2\cdot Y_n}{1+Y_n^2+2\cdot Y_n} = \frac{\left(1-Y_n\right)^2}{\left(1+Y_n\right)^2}\\
        \Longrightarrow\quad \sqrt[4]{1-Y_{n-1}^4} &= \frac{1-Y_n}{1+Y_n}\\
        \Longrightarrow\quad Y_n &= \frac{1-\sqrt[4]{1-Y_{n-1}^4}}{1+\sqrt[4]{1-Y_{n-1}^4}}
        = \frac{1-\sqrt[4]{1-y_{n-1}^4}}{1+\sqrt[4]{1-y_{n-1}^4}} = y_n
    \end{align*}
    \en{Thus we have proven $Y_n=y_n$ using the induction hypothesis $Y_{n-1}=y_{n-1}$ in the last step. It remains to prove $Z_n=z_n$:}%
    \de{Somit ist bewiesen, dass $Y_n=y_n$ ist, wobei wir im letzten Schritt die Induktionsvoraussetzung $Y_{n-1}=y_{n-1}$ benutzt haben. Wir müssen nun noch $Z_n=z_n$ beweisen:}

    \en{From the definition of $e_n$ in Alg.~\ref{\en{EN}algBB2} we obtain}%
    \de{Aus der Definition von $e_n$ in Algorithmus \ref{\en{EN}algBB2} folgt}
    \begin{align*}
        e_{2n} &= e_{2n-1}\cdot(1+k_{2n})^{2}- 2^{2n+1}\cdot k_{2n}\\
        \text{\en{and}\de{und}}\quad e_{2n-1} &= e_{2n-2}\cdot(1+k_{2n-1})^{2}- 2^{2n}\cdot k_{2n-1}
    \end{align*}
    \en{Putting this representation of $e_{2n-1}$ into the one of $e_{2n}$ yields:}%
    \de{Wenn wir diese Darstellung von $e_{2n-1}$ in die für $e_{2n}$ einsetzen erhalten wir:}
    \begin{align*}
        e_{2n} &= \left[e_{2n-2}\cdot(1+k_{2n-1})^{2}- 2^{2n}\cdot k_{2n-1}\right]\cdot(1+k_{2n})^{2}- 2^{2n+1}\cdot k_{2n}\\
        &= e_{2n-2}\cdot\left[(1+k_{2n-1})^{2}\cdot(1+k_{2n})^{2}\right]
           - 2^{2n}\cdot\left[k_{2n-1}\cdot(1+k_{2n})^{2}+ 2\cdot k_{2n}\right]
    \end{align*}
    \en{Using $k_{2n}=Y_n^2=y_n^2$ and $k_{2n-1}=\frac{2\cdot Y_n}{1+Y_n^2}=\frac{2\cdot y_n}{1+y_n^2}$ we obtain:}%
    \de{Mit $k_{2n}=Y_n^2=y_n^2$ und $k_{2n-1}=\frac{2\cdot Y_n}{1+Y_n^2}=\frac{2\cdot y_n}{1+y_n^2}$ folgt:}
    \begin{align*}
        e_{2n} &= e_{2n-2}\cdot\left[\left(1+\frac{2\cdot y_n}{1+y_n^2}\right)^{2}\cdot(1+y_n^2)^{2}\right]
           - 2^{2n}\cdot\left[\frac{2\cdot y_n}{1+y_n^2}\cdot(1+y_n^2)^{2}+ 2\cdot y_n^2\right]\\
           &=e_{2n-2}\cdot\left[\left(1+y_n^2+2\cdot y_n\right)^{2}\right]
           - 2^{2n}\cdot\left[2\cdot y_n\cdot(1+y_n^2)+ 2\cdot y_n^2\right]\\
           &=e_{2n-2}\cdot\left(1+y_n\right)^{4}
           - 2^{2n+1}\cdot y_n\cdot\left(1+y_n+y_n^2\right)
    \end{align*}
    \en{Here we use the induction hypothesis $z_{n-1}=Z_{n-1}=e_{2n-2}$:}%
    \de{Hier nutzen wir die Induktionsvoraussetzung $z_{n-1}=Z_{n-1}=e_{2n-2}$:}
    \begin{align*}
        Z_n := e_{2n} &= e_{2n-2}\cdot\left(1+y_n\right)^{4}
           - 2^{2n+1}\cdot y_n\cdot\left(1+y_n+y_n^2\right)\\
           &= z_{n-1}\cdot\left(1+y_n\right)^{4}
           - 2^{2n+1}\cdot y_n\cdot\left(1+y_n+y_n^2\right) = z_n
    \end{align*}
\end{itemize}
\en{Thus we have proven that $\pi_N=1/z_N=1/e_{2N}=\widehat{\pi}_{2N}$ holds for all $N\in\mathbb N$, thus that Alg.~\ref{\en{EN}algBB4} produces every second output of Alg.~\ref{\en{EN}algBB2}.}%
\de{Somit haben wir für alle $N\geq 0$ bewiesen, dass $\pi_N=1/z_N=1/e_{2N}=\widehat{\pi}_{2N}$ gilt, d.h. dass also Algorithmus \ref{\en{EN}algBB4} genau jedes zweite Ergebnis von Algorithmus \ref{\en{EN}algBB2} produziert.}
\end{proof}

\begin{proof}[\en{Proof of}\de{Beweis des} Thm.~\ref{\en{EN}theoequi}]
\en{In Prop.~\ref{\en{EN}equi1} we proved $\widehat{\pi}_N=p_N$ and in Prop.~\ref{\en{EN}equi2} we proved $\pi_N=\widehat{\pi}_{2N}$ -- thus both statements from Thm.~\ref{\en{EN}theoequi} are proven, and the algorithms are equivalent.}%
\de{In Satz \ref{\en{EN}equi1} haben wir $\widehat{\pi}_N=p_N$ bewiesen und in Satz \ref{\en{EN}equi2} haben wir $\pi_N=\widehat{\pi}_{2N}$ bewiesen -- somit sind beide Aussagen des Thm.~\ref{\en{EN}theoequi} bewiesen, und die drei Algorithmen sind äquivalent.}
\end{proof}

\begin{bem}
\en{The first outputs of the three equivalent algorithms are:}%
\de{Die ersten Ausgaben der drei äquivalenten Algorithmen sind:}
\begin{align*}
  \pi_0 = \widehat{\pi}_0 = p_0 &= \color{darkergray}{2\ko91421~35623~73095~04880~16887~24209~69807~85696~71875}\ldots\\
          \widehat{\pi}_1 = p_1 &= \underline{3\ko14}\color{darkergray}{057~92505~22168~24831~13312~68975~82331~17734~40237}\ldots\\
  \pi_1 = \widehat{\pi}_2 = p_2 &= \underline{3\ko14159~26}\color{darkergray}{462~13542~28214~93444~31982~69577~43144~37223}\ldots\\
          \widehat{\pi}_3 = p_3 &= \underline{3\ko14159~26535~89793~238}\color{darkergray}{27~95127~74801~86397~43812~25504}\ldots\\
  \pi_2 = \widehat{\pi}_4 = p_4 &= \underline{3\ko14159~26535~89793~23846~26433~83279~50288~41971}~\color{darkergray}{14678}\ldots
\end{align*}
\end{bem}

\vfill\pagebreak
\section{\en{Proof of the Brent-Salamin Algorithm}\de{Beweis des Brent-Salamin-Algorithmus}}\label{\en{EN}gausskap}
\renewcommand{\leftmark}{\en{Proof of the Brent-Salamin Algorithm}\de{Beweis des Brent-Salamin-Algorithmus}}
\en{In this chapter we prove that the Brent Salamin algorithm converges to $\pi$. This proof elaborates \cite{\en{EN}lord_1992} and uses only integral calculus like integration by parts or by substitution (also: two-dimensional substitution).}%
\de{In diesem Kapitel beweisen wir, dass der Brent-Salamin-Algorithmus gegen $\pi$ konvergiert. Der vorliegende Beweis arbeitet \cite{\en{EN}lord_1992} aus und setzt nur Integrationstechniken wie partielle Integration und Integration durch Substitution (auch zweidimensional -- also den Transformationssatz) voraus.}\\

\begin{theo} \label{\en{EN}tgauss}
\en{It holds the following formula due to Gauß (1809), Brent (1976) and Salamin (1976):}%
\de{Es gilt die Formel von Gauß (1809), Brent (1976) und Salamin (1976)}
$$\displaystyle\pi = \frac{4\cdot \AGM(1;1/\sqrt{2})^2}{1-2\cdot\sum_{j=1}^\infty 2^j\cdot c_j^2}$$
\en{Here, $\AGM(1;1/\sqrt{2})$ denotes the arithmetic-geometric mean (i.e. the common limit of $a_n$ and $b_n$ from the Brent-Salamin algorithm on p.~\pageref{\en{EN}algBS}). In particular, the sequence}%
\de{wobei $\AGM(1;1/\sqrt{2})$ das arithmetisch-geometrische Mittel (also den gemeinsamen Grenzwert der Folgen $a_n$ und $b_n$ des Brent-Salamin-Algorithmus auf S. \pageref{\en{EN}algBS}) bezeichnet.
Insbesondere konvergiert die Folge}
$$p_N := \frac{(a_N+b_N)^2}{1-2\cdot\sum_{j=1}^N 2^j\cdot c_j^2}$$
\en{of the Brent-Salamin algorithm on p.~\pageref{\en{EN}algBS} converges to $\pi$.}%
\de{des Brent-Salamin-Algorithmus auf S. \pageref{\en{EN}algBS} gegen $\pi$.}
\end{theo}
\begin{proof}\renewcommand{\qedsymbol}{}
\en{First we generalize initial values of the Brent-Salamin algorithm to}%
\de{Wir verallgemeinern den Brent-Salamin-Algorithmus zunächst auf die Startwerte}
$$a_0 := a\qquad\text{\en{and}\de{und}}\qquad b_0 := b\qquad\text{\en{with}\de{mit}}\qquad 0<b<a$$
\en{Later (from Prop.~\ref{\en{EN}satzwert} onwards) we will use $a=1$ and $b=1/\sqrt{2}$. On p.~\pageref{\en{EN}proofgaussi} we will continue the proof of Thm.~\ref{\en{EN}tgauss}, but first we proof some auxiliary propositions:}%
\de{Später (ab Satz \ref{\en{EN}satzwert}) werden wir $a=1$ und $b=1/\sqrt{2}$ setzen. Auf Seite \pageref{\en{EN}proofgaussi} wird der Beweis von Thm.~\ref{\en{EN}tgauss} fortgesetzt, zunächst beweisen wir einige Hilfssätze:}
\end{proof}

\begin{thm}\label{\en{EN}inequmean}
\en{The geometric mean $\sqrt{x\cdot y}$ and the arithmetic mean $\frac{x+y}{2}$ of two positive real numbers $x\neq y$ satisfy:}%
\de{Für das geometrische Mittel $\sqrt{x\cdot y}$ und das arithmetische Mittel $\frac{x+y}{2}$ zweier positiver reeller Zahlen $x\neq y$ gilt:}
$$\sqrt{x\cdot y} < \frac{x+y}{2}$$
\end{thm}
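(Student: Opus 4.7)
The plan is to start from the trivial observation that $(x-y)^2 > 0$, which is a strict inequality precisely because the hypothesis $x \neq y$ is in force. Expanding the square yields $x^2 - 2xy + y^2 > 0$, and after adding $4xy$ to both sides this becomes $(x+y)^2 > 4xy$.

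From here I would divide through by $4$ to get $\left(\frac{x+y}{2}\right)^2 > xy$. Since both $x$ and $y$ are positive, the quantities $\frac{x+y}{2}$ and $\sqrt{xy}$ are themselves positive real numbers, so applying the (strictly monotone) square root to both sides of the inequality preserves its direction and produces the claim
$$\frac{x+y}{2} > \sqrt{x\cdot y}.$$

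There is essentially no obstacle here: the argument is one of the textbook derivations of the AM-GM inequality in the two-variable case. The only subtlety worth pointing out explicitly is that positivity of $x$ and $y$ is needed twice, namely to ensure $\sqrt{xy}$ is defined as a real number and to guarantee that taking square roots does not flip the inequality. The strictness is inherited directly from the assumption $x \neq y$, which makes $(x-y)^2$ strictly positive.
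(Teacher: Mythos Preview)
Your proof is correct and is essentially identical to the paper's: both start from $(x-y)^2>0$ (strict since $x\neq y$), expand and rearrange to $(x+y)^2>4xy$, and then pass to square roots using positivity. The paper is a touch more terse in the final step, but there is no real difference in approach.
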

\begin{proof}
\en{From $x\neq y$ we deduce:}%
\de{Zunächst gilt (weil n.V. $x\neq y$ ist):}
\begin{align*}
    0< (x-y)^{2} =x^{2}-2xy+y^{2} =x^{2}+2xy+y^{2}-4xy=(x+y)^{2}-4xy
\end{align*}
\en{This yields $4xy < (x+y)^2$ and proves that the geometric mean $\sqrt{x\cdot y}$ is less than the arithmetic mean $\frac{x+y}{2}$.}%
\de{Hieraus folgt $4xy < (x+y)^2$ und somit, dass das geometrische Mittel $\sqrt{x\cdot y}$ kleiner als das arithmetische Mittel $\frac{x+y}{2}$ ist.}
\end{proof}

\begin{thm}\label{\en{EN}Thm1}
\en{The sequences $a_n$ and $b_n$ of the Brent-Salamin algorithm \ref{\en{EN}algBS} converge to a common limit which we call $\AGM(a,b)$.}%
\de{Die Folgen $a_n$ und $b_n$ des Brent-Salamin-Algorithmus \ref{\en{EN}algBS} konvergieren gegen einen gemeinsamen Grenzwert, den wir $\AGM(a,b)$ nennen.}
\en{The convergence of $a_n \searrow \AGM(a,b)$ and of $b_n \nearrow \AGM(a,b)$ is strictly monotonic and it holds $c_{n+1}^2<\frac 1 4 c_n^2$.}%
\de{Die Konvergenz von $a_n \searrow \AGM(a,b)$ und $b_n \nearrow \AGM(a,b)$ erfolgt streng monoton und es gilt $c_{n+1}^2<\frac 1 4 c_n^2$.}
\end{thm}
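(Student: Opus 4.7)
The plan is to obtain the nested inequality $b_{n-1}<b_n<a_n<a_{n-1}$ for all $n\ge 1$ by induction, which gives monotonicity and a common bounding interval for both sequences; then the monotone convergence theorem supplies limits $\alpha,\beta$, and passing to the limit in $a_n=\tfrac12(a_{n-1}+b_{n-1})$ forces $\alpha=\beta=:\AGM(a,b)$. The base step $b_0<a_0$ is the hypothesis $0<b<a$, and the inductive step combines Prop.~\ref{\en{EN}inequmean} (giving $b_n=\sqrt{a_{n-1}b_{n-1}}<\tfrac12(a_{n-1}+b_{n-1})=a_n$) with the elementary observations $a_n<a_{n-1}$ (arithmetic mean of two distinct values lies strictly between them) and $b_n>b_{n-1}$ (geometric mean likewise).

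For the bound on $c_n$, I would compute directly from the definitions
\begin{align*}
c_{n+1}^2 &= a_{n+1}^2-b_{n+1}^2
         = \left(\frac{a_n+b_n}{2}\right)^{\!2}-a_nb_n
         = \frac{(a_n-b_n)^2}{4},\\
c_n^2    &= a_n^2-b_n^2 = (a_n-b_n)(a_n+b_n),
\end{align*}
so
$$\frac{c_{n+1}^2}{c_n^2}=\frac{a_n-b_n}{4\cdot(a_n+b_n)}<\frac14,$$
the last step using $0<a_n-b_n<a_n+b_n$, which is exactly the strict inequality $b_n<a_n$ produced in the first part. Note that the same computation shows $c_n^2>0$, so the square roots and ratios are well defined throughout.

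I do not expect a genuine obstacle here: the whole argument reduces to AM--GM plus monotone convergence. The only point worth being careful about is to keep the inequalities \emph{strict} from step to step (which is why Prop.~\ref{\en{EN}inequmean} was stated for $x\neq y$): this is what makes the convergence strictly monotonic and guarantees that the contraction factor $\frac{a_n-b_n}{4(a_n+b_n)}$ is always strictly less than $\frac14$, rather than merely $\le\frac14$.
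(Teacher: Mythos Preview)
Your proposal is correct and follows essentially the same route as the paper: AM--GM from Prop.~\ref{\en{EN}inequmean} gives $b_n<a_n$, strict monotonicity and boundedness follow, and the identical computation $c_{n+1}^2=\tfrac{(a_n-b_n)^2}{4}=\tfrac{a_n-b_n}{4(a_n+b_n)}\,c_n^2<\tfrac14 c_n^2$ yields the contraction. The only cosmetic difference is that you deduce $\alpha=\beta$ by passing to the limit in the recursion $a_n=\tfrac12(a_{n-1}+b_{n-1})$, whereas the paper uses $c_n^2<4^{-n}c_0^2\to 0$ directly; both are immediate.
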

\begin{proof}
\en{Prop.~\ref{\en{EN}inequmean} tells that $b_n < a_n$ holds for all $n$. This implies the strict monotonicity of $b_{n+1}=\sqrt{a_n\cdot b_n} > \sqrt{b_n\cdot b_n} = b_n$ and $a_{n+1} = \frac{a_n+b_n}{2} < \frac{a_n+a_n}{2} = a_n$.
Both sequences are bounded by $b=b_0\leq b_n<a_n\leq a_0=a$ and thus convergent.
For $c_{n+1}^2$ it holds:}%
\de{Aus Satz \ref{\en{EN}inequmean} folgt, dass $b_n < a_n$ für alle $n$ gilt. Hieraus folgt die strenge Monotonie $b_{n+1}=\sqrt{a_n\cdot b_n} > \sqrt{b_n\cdot b_n} = b_n$ und $a_{n+1} = \frac{a_n+b_n}{2} < \frac{a_n+a_n}{2} = a_n$.\linebreak
Beide Folgen sind durch $b=b_0\leq b_n<a_n\leq a_0=a$ beschränkt und somit konvergent.
Für ihre Abweichung gilt:}
\begin{align}
    c_{n+1}^2 &=a_{n+1}^2-b_{n+1}^2= \left(\frac{a_n+b_n}{2}\right)^2-a_n\cdot b_n
    = \frac{a_n^2+2a_nb_n+b_n^2-4a_nb_n}{4}\nonumber\\
    &=\frac{(a_n-b_n)^2}{4} = \frac{a_n-b_n}{4(a_n+b_n)}\cdot(a_n^2-b_n^2) < \frac 1 4 \cdot c_n^2\label{\en{EN}glgcn}
\end{align}
\en{This proves that $c_n^2=a_n^2-b_n^2 < 4^{-n}\cdot c_0^2$ converges to zero and that $a_n$ and $b_n$ have the same limit.}%
\de{Hiermit ist bewiesen, dass $c_n^2=a_n^2-b_n^2 < 4^{-n}\cdot c_0^2$ gegen Null konvergiert, und dass $a_n$ und $b_n$ also gegen den selben Grenzwert konvergieren.}
\end{proof}

\pagebreak

\begin{thm}\label{\en{EN}thm2}
\en{The value of $$\displaystyle I(a,b) := \int\limits_0^{\pi/2}\frac{\mathrm{d}\Phi}{\sqrt{a^2\cos^2(\Phi)+b^2\sin^2(\Phi)}}$$
is constant on the whole AGM sequence, i.e. it holds $I(a_n,b_n)=I(a_0,b_0)$ for all $n\in\mathbb N$.}%
\de{Das Integral $$\displaystyle I(a,b) := \int\limits_0^{\pi/2}\frac{\mathrm{d}\Phi}{\sqrt{a^2\cos^2(\Phi)+b^2\sin^2(\Phi)}}$$
bleibt konstant über die ganze AGM-Folge, d.h. es gilt $I(a_n,b_n)=I(a_0,b_0)$ für alle $n\in\mathbb N$.}
\end{thm}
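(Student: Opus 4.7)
The plan is to establish the single-step invariance
$$I(a,b) \;=\; I\!\left(\tfrac{a+b}{2},\sqrt{ab}\right)$$
for every admissible pair $a>b>0$; the full claim $I(a_n,b_n)=I(a_0,b_0)$ then follows by a trivial induction on $n$, since each pair $(a_n,b_n)$ arises from its predecessor by exactly this arithmetic/geometric-mean step.

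For the single-step statement I would use the classical Landen--Gauss substitution: introduce a new variable $\theta\in[0,\pi/2]$ via
$$\sin\Phi \;=\; \frac{2a\sin\theta}{(a+b)+(a-b)\sin^2\theta}.$$
First I would check that the right-hand side is a smooth, strictly increasing function of $\theta$ that runs from $0$ at $\theta=0$ to $1$ at $\theta=\pi/2$ (where the denominator simplifies to $2a$), so the relation defines a smooth bijection $[0,\pi/2]\to[0,\pi/2]$ and the substitution is legitimate. Then I would compute, using only $\sin^2+\cos^2=1$ and patient expansion, each of
$$\cos\Phi,\qquad \frac{d\Phi}{d\theta},\qquad a^2\cos^2\Phi+b^2\sin^2\Phi$$
as an explicit rational expression in $\sin\theta$ and $\cos\theta$. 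Two polynomial identities in $s:=\sin\theta$ make the final cancellation transparent, namely
$$(a+b)^2-2(a^2+b^2)s^2+(a-b)^2s^4 \;=\; \bigl[(a+b)^2-(a-b)^2s^2\bigr]\cos^2\theta$$
(used when evaluating $\cos\Phi$) and
$$(a+b)^2-2(a^2-b^2)s^2+(a-b)^2s^4 \;=\; \bigl[(a+b)-(a-b)s^2\bigr]^2$$
(used when simplifying $a^2\cos^2\Phi+b^2\sin^2\Phi$). With these in hand, the factors $(a+b)\pm(a-b)\sin^2\theta$ cancel between numerator and denominator, and one arrives at
$$\frac{d\Phi}{\sqrt{a^2\cos^2\Phi+b^2\sin^2\Phi}} \;=\; \frac{2\,d\theta}{\sqrt{(a+b)^2\cos^2\theta+4ab\sin^2\theta}} \;=\; \frac{d\theta}{\sqrt{a_1^2\cos^2\theta+b_1^2\sin^2\theta}},$$
where $a_1=(a+b)/2$ and $b_1=\sqrt{ab}$. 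Integrating from $0$ to $\pi/2$ yields $I(a,b)=I(a_1,b_1)$, and induction finishes the job.

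The main obstacle is spotting (or verifying) the two polynomial factorisations above; without them the pointwise transformation of the integrand looks opaque, but once they are in place the rest is just bookkeeping. Note that only a one-dimensional substitution is actually needed for this particular lemma, so the ``two-dimensional substitution'' alluded to in the introduction must appear later in the chapter --- presumably to evaluate the starting integral $I(1,1/\sqrt{2})$ in terms of $\pi$ --- rather than in the invariance statement itself.
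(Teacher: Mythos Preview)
Your argument is correct: the Landen--Gauss substitution $\sin\Phi=\dfrac{2a\sin\theta}{(a+b)+(a-b)\sin^2\theta}$ does give a bijection $[0,\pi/2]\to[0,\pi/2]$, your two polynomial identities check out, and after the cancellations one indeed lands on $d\theta/\sqrt{a_1^2\cos^2\theta+b_1^2\sin^2\theta}$. The induction is then immediate.

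The paper, however, takes a different and somewhat slicker path. It first passes to the algebraic form
\[
I(a,b)=\int_0^\infty\frac{dt}{\sqrt{(t^2+a^2)(t^2+b^2)}}
\]
via $t=b\tan\Phi$, and then applies Gauss's substitution $x=\tfrac12\bigl(t-\tfrac{ab}{t}\bigr)$. The only nontrivial computation is the single identity $\dfrac{(t^2+a^2)(t^2+b^2)}{t^2}=(2x)^2+(a+b)^2$, after which the invariance drops out by symmetry of the even integrand. Compared with your route, this avoids the pair of quartic factorisations and the separate tracking of $\cos\Phi$, $d\Phi/d\theta$, and the radicand; it also yields the algebraic representation of $I$ as a reusable by-product (the paper uses it again when treating the companion integral $L(a,b)$). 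Your approach, on the other hand, never leaves the trigonometric form and is closer to how Landen originally discovered the transformation; it is self-contained but a bit heavier on bookkeeping. Your closing remark is accurate: the two-dimensional substitution advertised in the introduction is not used here but later, for $\Gamma(1/2)=\sqrt\pi$ and the Beta--Gamma relation.
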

\begin{proof}
\en{First we substitute $t = b\cdot \tan \Phi$.
Then $1+\tan^2\Phi = \frac{1}{\cos^2\Phi}$ yields $\cos^2\Phi = \frac{b^2}{b^2+b^2\tan^2\Phi}=\frac{b^2}{b^2+t^2}$ and $\sin^2\Phi=1-\cos^2\Phi = \frac{b^2+t^2-b^2}{b^2+t^2} = \frac{t^2}{b^2+t^2}$.
Further, it holds $\frac{\mathrm{d}t}{\mathrm{d}\Phi} = b\cdot(1+\tan^2\Phi)=b+\frac{t^2}{b}=\frac{t^2+b^2}{b}$, thus $\frac{\mathrm{d}\Phi}{\mathrm{d}t} = \frac{b}{t^2+b^2}$. 
This shows that the substitution yields the following representation of $I(a,b)$:}%
\de{Wir führen zunächst die Substitution $t = b\cdot \tan \Phi$ durch:
Aus $1+\tan^2\Phi = \frac{1}{\cos^2\Phi}$ folgt dann $\cos^2\Phi = \frac{b^2}{b^2+b^2\tan^2\Phi}=\frac{b^2}{b^2+t^2}$ und $\sin^2\Phi=1-\cos^2\Phi = \frac{b^2+t^2-b^2}{b^2+t^2} = \frac{t^2}{b^2+t^2}$.
Außerdem erhalten wir $\frac{\mathrm{d}t}{\mathrm{d}\Phi} = b\cdot(1+\tan^2\Phi)=b+\frac{t^2}{b}=\frac{t^2+b^2}{b}$, also $\frac{\mathrm{d}\Phi}{\mathrm{d}t} = \frac{b}{t^2+b^2}$. Die genannte Substitution liefert also folgende alternative Darstellung von $I(a,b)$:}
\begin{align}
    I(a,b)&= \int\limits_0^{\infty} \frac{1}{\sqrt{a^2\cdot\frac{b^2}{b^2+t^2} + b^2\cdot\frac{t^2}{b^2+t^2}}}\cdot\frac{b}{t^2+b^2} \mathrm{d}t\nonumber\\
    &= \int\limits_0^{\infty} \frac{1}{\sqrt{(a^2+t^2)\cdot\frac{b^2}{b^2+t^2}}}\cdot\frac{b}{t^2+b^2} \mathrm{d}t\nonumber\\
    &= \int\limits_0^{\infty} \frac{\mathrm{d}t}{\sqrt{(t^2+a^2)\cdot(t^2+b^2)}}\label{\en{EN}glg22}
\end{align}
\en{Now we substitute $x=\frac{1}{2}\left(t-\frac{ab}{t}\right)$.
This yields $2xt = t^2 - ab$ and $t = x \varpm \sqrt{x^2+ab}$ (since $t>0$).
Thus it holds $\frac{\mathrm{d}t}{\mathrm{d}x} = 1 + \frac{2x}{2\sqrt{x^2+ab}} = \frac{\sqrt{x^2+ab}+x}{\sqrt{x^2+ab}}=\frac{t}{\sqrt{x^2+ab}}$ and:}%
\de{Nun substituieren wir $x=\frac{1}{2}\left(t-\frac{ab}{t}\right)$.
Das führt auf $2xt = t^2 - ab$ und (weil $t>0$) auf $t = x \varpm \sqrt{x^2+ab}$.
Somit gilt $\frac{\mathrm{d}t}{\mathrm{d}x} = 1 + \frac{2x}{2\sqrt{x^2+ab}} = \frac{\sqrt{x^2+ab}+x}{\sqrt{x^2+ab}}=\frac{t}{\sqrt{x^2+ab}}$, also:}
\begin{align*}
    I(a,b) &= \int\limits_{-\infty}^{\infty} \frac{1}{\sqrt{(t^2+a^2)\cdot(t^2+b^2)}}\cdot\frac{t}{\sqrt{x^2+ab}} \mathrm{d}x
    = \int\limits_{-\infty}^{\infty} \frac{\mathrm{d}x}{\sqrt{f(x)\cdot(x^2+ab)}}
\end{align*}
\en{Here we have denoted $\frac{(t^2+a^2)\cdot(t^2+b^2)}{t^2}$ by $f(x)$ (remember $t>0$). About $f(x)$ it holds:}%
\de{Hier haben wir $\frac{(t^2+a^2)\cdot(t^2+b^2)}{t^2}$ zu $f(x)$ zusammengefasst (beachte $t>0$). Für $f(x)$ gilt:}
\begin{align*}
f(x) & := \frac{(t^2+a^2)\cdot(t^2+b^2)}{t^2}= \frac{t^4+a^2t^2+b^2t^2+a^2b^2}{t^2}\\
&= t^2+\frac{a^2b^2}{t^2}+a^2+b^2 = \left(t-\frac{ab}{t}\right)^2+2ab+a^2+b^2 \\
&=(2x)^2+(a+b)^2
\end{align*}
\en{This yields:}%
\de{Für $I(a,b)$ erhalten wir also:}
\begin{align*}
    I(a,b) & = \int\limits_{-\infty}^{\infty} \frac{\mathrm{d}x}{\sqrt{((2x)^2+(a+b)^2)\cdot(x^2+ab)}}= \frac 1 2 \int\limits_{-\infty}^{\infty} \frac{\mathrm{d}x}{\sqrt{\left(x^2+\left(\frac{a+b}{2}\right)^2\right)\cdot(x^2+ab)}}
\end{align*}
\en{Here we use the fact that the integrand is even, thus $\frac 1 2 \int\limits_{-\infty}^{\infty}$ yields $\int\limits_{0}^{\infty}$:}%
\de{Hier nutzen wir, dass der Integrand eine gerade Funktion ist, weshalb $\frac 1 2 \int\limits_{-\infty}^{\infty}$ in $\int\limits_{0}^{\infty}$ übergeht:}
\begin{align*}
I(a,b) &= \int\limits_0^{\infty} \frac{\mathrm{d}x}{\sqrt{\left(x^2+\left(\frac{a+b}{2}\right)^2\right)\cdot(x^2+ab)}} = I\lk \frac{a+b}{2},\sqrt{ab}\rk
\end{align*}
\en{Now we have proven that for any $a>b>0$ it holds $I\lk \frac{a+b}{2},\sqrt{ab}\rk=I(a,b)$.
By induction this yields $I(a_n,b_n)=I(a_0,b_0)$ for all $n\in\mathbb N$.}%
\de{Wir haben also für beliebige $a>b>0$ bewiesen, dass $I\lk \frac{a+b}{2},\sqrt{ab}\rk=I(a,b)$ gilt.
Per vollständiger Induktion folgt hieraus $I(a_n,b_n)=I(a_0,b_0)$ für alle $n\in\mathbb N$.}
\end{proof}

\begin{thm}\label{\en{EN}satziagm}
\en{Let $I(a,b)$ be the integral from Prop.~\ref{\en{EN}thm2}. Then it holds:}%
\de{Für das Integral $I(a,b)$ aus Satz \ref{\en{EN}thm2} gilt:}
$$I(a,b) = \frac{\pi}{2\cdot\AGM(a,b)}$$
\end{thm}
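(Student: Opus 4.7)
The plan is to combine Prop.~\ref{\en{EN}thm2} (which says that $I$ is invariant along the AGM sequence) with Prop.~\ref{\en{EN}Thm1} (which says that $a_n,b_n$ share a common limit $M := \AGM(a,b)$), and then pass to the limit $n\to\infty$. Since the integrand of $I(a_n,b_n)$ becomes trivial in the limit, this should yield the claim essentially for free.

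Concretely, I would first invoke Prop.~\ref{\en{EN}thm2} to write
$$ I(a,b) \;=\; I(a_n,b_n) \;=\; \int_0^{\pi/2} \frac{\mathrm{d}\Phi}{\sqrt{a_n^2\cos^2\Phi + b_n^2\sin^2\Phi}} \qquad \text{for all } n\in\mathbb{N}. $$
Next, I would control the integrand pointwise by a simple squeeze. Because $b_n\leq a_n$ and $\cos^2\Phi+\sin^2\Phi=1$, one has
$$ b_n^2 \;\leq\; a_n^2\cos^2\Phi + b_n^2\sin^2\Phi \;\leq\; a_n^2 \qquad \text{for all } \Phi\in[0,\pi/2], $$
which, after inverting and taking square roots, gives the uniform bound
$$ \frac{1}{a_n} \;\leq\; \frac{1}{\sqrt{a_n^2\cos^2\Phi + b_n^2\sin^2\Phi}} \;\leq\; \frac{1}{b_n}. $$
Integrating over $[0,\pi/2]$ yields $\frac{\pi}{2a_n} \leq I(a_n,b_n) \leq \frac{\pi}{2b_n}$.

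Finally, I would let $n\to\infty$. By Prop.~\ref{\en{EN}Thm1}, both $a_n$ and $b_n$ converge to $M$, so both bounds tend to $\frac{\pi}{2M}$, and the squeeze theorem gives $I(a_n,b_n)\to \frac{\pi}{2M}$. Combined with the constancy $I(a,b)=I(a_n,b_n)$ from the first step, this proves $I(a,b)=\frac{\pi}{2\cdot\AGM(a,b)}$. There is no real obstacle here: the squeeze argument bypasses any need for a dominated-convergence-style justification, and all ingredients are already supplied by Props.~\ref{\en{EN}Thm1} and \ref{\en{EN}thm2}.
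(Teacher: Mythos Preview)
Your argument is correct and follows the same overall strategy as the paper: use the invariance $I(a,b)=I(a_n,b_n)$ from Prop.~\ref{\en{EN}thm2} together with $a_n,b_n\to M$ from Prop.~\ref{\en{EN}Thm1}, and pass to the limit. The only difference is in how the limit is taken. The paper simply writes $\lim_{n\to\infty} I(a_n,b_n)=I(\lim a_n,\lim b_n)=I(M,M)$, i.e.\ it interchanges limit and integration without further comment. Your squeeze $\frac{\pi}{2a_n}\le I(a_n,b_n)\le\frac{\pi}{2b_n}$ accomplishes the same thing but is more self-contained: it makes the passage to the limit fully rigorous without appealing (even implicitly) to a dominated or uniform convergence argument. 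So your route is marginally more elementary, at the cost of one extra line.
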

\begin{proof}
\en{With $m:=\AGM(a,b)$, Prop.~\ref{\en{EN}Thm1} tells that $a_n$ and $b_n$ converge to $m$.
If we interchange the limit and the integration, Prop.~\ref{\en{EN}thm2} yields:}%
\de{Mit $m:=\AGM(a,b)$ gilt nach Satz \ref{\en{EN}Thm1}, dass $a_n\rightarrow m$ und $b_n \rightarrow m$ konvergieren.
Vertauschen von Grenzwertbildung und Integration liefert dann mit Satz \ref{\en{EN}thm2}:}\belowdisplayskip=-12pt
\begin{align*}
    I(a,b) &= I(a_n,b_n) = \lim_{n\rightarrow\infty} I(a_n,b_n) = I\left(\lim_{n\rightarrow\infty} a_n, \lim_{n\rightarrow\infty} b_n\right)= I(m,m)\\
    &=\int\limits_0^{\pi/2}\frac{\mathrm{d}\Phi}{\sqrt{m^2\cos^2(\Phi)+m^2\sin^2(\Phi)}} = \frac{\pi}{2}\cdot\frac{1}{m} = \frac{\pi}{2\cdot\AGM(a,b)}
\end{align*}
\end{proof}

\begin{thm}\label{\en{EN}satzLI}
\en{If we denote}%
\de{Für das Integral} $$\displaystyle L(a,b) := \int\limits_0^{\pi/2}\frac{\cos^2(\Phi)\mathrm{d}\Phi}{\sqrt{a^2\cos^2(\Phi)+b^2\sin^2(\Phi)}}$$
\en{then it holds}\de{gilt} $L(b,a)+L(a,b)=I(a,b)$ \en{and}\de{und} $L(b,a)-L(a,b) = \frac{a-b}{a+b}\cdot L(b_1,a_1)$.
\end{thm}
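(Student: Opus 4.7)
The statement consists of two identities, and the plan is to prove them by rather different means.

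For the first identity $L(b,a)+L(a,b)=I(a,b)$, I would apply the substitution $\Phi\mapsto \pi/2-\Phi$ inside $L(b,a)$, which interchanges $\sin$ and $\cos$, giving $L(b,a)=\int_0^{\pi/2}\sin^2(\Phi)\, d\Phi/\sqrt{a^2\cos^2(\Phi)+b^2\sin^2(\Phi)}$. Adding this to $L(a,b)$, the numerator becomes $\sin^2(\Phi)+\cos^2(\Phi)=1$, so the sum is exactly $I(a,b)$.

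For the second identity $L(b,a)-L(a,b)=\frac{a-b}{a+b}\cdot L(b_1,a_1)$, I would re-run the two substitutions used in the proof of Prop.~\ref{\en{EN}thm2}. First, $t=b\tan(\Phi)$ converts $L(a,b)$ and $L(b,a)$ into $\int_0^\infty b^2\, dt/[(b^2+t^2)^{3/2}\sqrt{a^2+t^2}]$ and $\int_0^\infty t^2\, dt/[(b^2+t^2)^{3/2}\sqrt{a^2+t^2}]$ respectively (as a bonus, adding these and using $b^2+t^2$ in the numerator reproves the first identity via \eqref{\en{EN}glg22}). Their difference is $\int_0^\infty (t^2-b^2)\, dt/[(b^2+t^2)\sqrt{(a^2+t^2)(b^2+t^2)}]$. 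Next I would apply $x=\tfrac12(t-ab/t)$, reusing the key facts $(t^2+a^2)(t^2+b^2)=t^2(4x^2+(a+b)^2)$ and $dt=(t/\sqrt{x^2+ab})\, dx$ established in Prop.~\ref{\en{EN}thm2}. This substitution bijects $t\in(0,\infty)$ with $x\in(-\infty,\infty)$, and the involution $t\leftrightarrow ab/t$ corresponds precisely to $x\mapsto -x$. Pairing the two halves $\int_{-\infty}^0$ and $\int_0^\infty$, the rational factor symmetrises: a short algebraic computation gives $\frac{t^2-b^2}{t^2+b^2}+\frac{(ab/t)^2-b^2}{(ab/t)^2+b^2}=\frac{2(a^2-b^2)}{4x^2+(a+b)^2}$, which depends only on $x$. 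Setting $a_1=(a+b)/2$ and $b_1=\sqrt{ab}$, the remaining integral is recognised (via the same $t=a_1\tan\Phi$-type rewriting applied to $L(b_1,a_1)$) as $\int_0^\infty a_1^2\, dx/[(x^2+a_1^2)^{3/2}\sqrt{x^2+b_1^2}]=L(b_1,a_1)$, and the scalar prefactor works out to $\frac{a^2-b^2}{4a_1^2}=\frac{a-b}{a+b}$, which is exactly what is claimed.

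The main obstacle is the bookkeeping in the second identity: tracking the Jacobian $dt/dx$, the pairing $t\leftrightarrow ab/t$ under $x\mapsto -x$, and the algebraic collapse of the paired rational terms into a quantity depending only on $x^2+a_1^2$. Each individual step is elementary (and most of the work has already been done in Prop.~\ref{\en{EN}thm2}), but the chain of substitutions is long enough that signs and the factor $a-b$ are easy to misplace, so the calculation deserves to be written out step by step.
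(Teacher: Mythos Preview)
Your argument is correct and, for the first identity, identical to the paper's. For the second identity you take a close variant of the paper's route, with one genuine difference worth noting. After the substitution $t=b\tan\Phi$, you express $L(b,a)$ via its $\sin^2$-form (obtained from the first identity) and apply the \emph{same} substitution, getting $\int_0^\infty t^2\,dt/[(b^2+t^2)^{3/2}\sqrt{a^2+t^2}]$; the resulting difference has the asymmetric numerator $t^2-b^2$. The paper instead obtains $L(b,a)$ by swapping $a\leftrightarrow b$ in the $t$-representation of $L(a,b)$, which produces the \emph{symmetric} integrand $(a^2-b^2)t^2/[(t^2+a^2)(t^2+b^2)]^{3/2}$. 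After the Landen substitution $x=\tfrac12(t-ab/t)$ this symmetric integrand is automatically a function of $x$ alone, so the paper can read off $L(b_1,a_1)$ directly. Your asymmetric integrand still depends on $t$ through $(t^2-b^2)/(t^2+b^2)$, which is why you need the extra pairing step $x\leftrightarrow -x$ (i.e.\ $t\leftrightarrow ab/t$); your algebraic identity for the paired sum is correct and collapses everything to the same integral the paper reaches. So both approaches land in the same place; the paper's is one step shorter because the $a\leftrightarrow b$ swap builds in the symmetry up front, while your version trades that for a symmetrisation at the end.
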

\begin{proof}
\en{To prove the first equation, we substitute $\Phi' = \frac{\pi}{2}-\Phi$. Then it holds $\cos(\Phi')=\sin(\Phi)$ and $\sin(\Phi')=\cos(\Phi)$, thus}%
\de{Um die erste Gleichung zu beweisen, substituieren wir $\Phi' = \frac{\pi}{2}-\Phi$. Dann gilt $\cos(\Phi')=\sin(\Phi)$ und $\sin(\Phi')=\cos(\Phi)$, also}
\begin{align*}
    L(b,a)&:= \int\limits_0^{\pi/2}\frac{\cos^2(\Phi)\mathrm{d}\Phi}{\sqrt{b^2\cos^2(\Phi)+a^2\sin^2(\Phi)}}
    = \int\limits_0^{\pi/2} \frac{\sin^2(\Phi')\mathrm{d}\Phi'}{\sqrt{b^2\sin^2(\Phi')+a^2\cos^2(\Phi')}}
\end{align*}
\en{With $\sin^2+\cos^2=1$ we deduce the first equation:}%
\de{Schließlich folgt aus $\sin^2+\cos^2=1$, dass gilt:}
$$L(b,a)+L(a,b) = \int\limits_0^{\pi/2}\frac{\left(\sin^2(\Phi)+\cos^2(\Phi)\right)\mathrm{d}\Phi}{\sqrt{a^2\cos^2(\Phi)+b^2\sin^2(\Phi)}} = I(a,b)$$
\en{Next we prove an alternative representation of $L(a,b)$, similar to the one of $I(a,b)$ in eq.~(\ref{\en{EN}glg22}): again we substitute $t = b\cdot \tan \Phi$ and obtain}%
\de{Als Nächstes beweisen wir eine alternative Darstellung von $L(a,b)$.
Genau wie bei $I(a,b)$ in Gleichung (\ref{\en{EN}glg22}) substituieren wir hierfür $t = b\cdot \tan \Phi$ und erhalten:}
\begin{align}
    L(a,b) = \int\limits_0^{\infty} \frac{\frac{b^2}{b^2+t^2}}{\sqrt{a^2\cdot\frac{b^2}{b^2+t^2} + b^2\cdot\frac{t^2}{b^2+t^2}}}\cdot\frac{b}{t^2+b^2} \mathrm{d}t
    =\int\limits_0^{\infty} \frac{\frac{b^2}{b^2+t^2} \mathrm{d}t}{\sqrt{(t^2+a^2)\cdot(t^2+b^2)}}\label{\en{EN}glg23}
\end{align}
\en{Then we calculate $L(b,a)$ by interchanging $a$ and $b$:}%
\de{Dann bilden wir die gesuchte Differenz $L(b,a)-L(a,b)$, wobei für den Ausdruck $L(b,a)$ die Variablen $a$ und $b$ vertauscht werden:}
\begin{align*}
    L(b,a)-L(a,b)&=\int\limits_0^{\infty}\frac{\frac{a^2}{a^2+t^2}-\frac{b^2}{b^2+t^2}}{\sqrt{(t^2+a^2)\cdot(t^2+b^2)}}\mathrm{d}t
\end{align*}
\en{From}\de{Hier gilt} $\frac{a^2}{a^2+t^2}-\frac{b^2}{b^2+t^2} = \frac{a^2(b^2+t^2)-b^2(a^2+t^2)}{(a^2+t^2)(b^2+t^2)} =\frac{a^2t^2-b^2t^2}{(a^2+t^2)(b^2+t^2)} =\frac{(a^2-b^2)t^2}{(a^2+t^2)(b^2+t^2)}$\en{ we deduce:}\de{, also:}
\begin{align*}
    L(b,a)-L(a,b)&= \int\limits_0^{\infty}\frac{\frac{(a^2-b^2)t^2}{(a^2+t^2)(b^2+t^2)}}{\sqrt{(t^2+a^2)\cdot(t^2+b^2)}}\mathrm{d}t
    = \int\limits_0^{\infty}\frac{(a^2-b^2)\cdot t^2}{(t^2+a^2)^{3/2}\cdot(t^2+b^2)^{3/2}}\mathrm{d}t
\end{align*}

\pagebreak\noindent
\en{And, as with $I(a,b)$, we substitute}%
\de{Und genau wie bei $I(a,b)$ substituieren wir nun} $x=\frac{1}{2}\left(t-\frac{ab}{t}\right)$:
\begin{align*}  
    L(b,a)-L(a,b)&= \int\limits_{-\infty}^{\infty} \frac{(a^2-b^2)t^2}{(t^2+a^2)^{3/2}\cdot(t^2+b^2)^{3/2}}\cdot\frac{t}{\sqrt{x^2+ab}} \mathrm{d}x\\
    &\tikzmark{Glg15}= \int\limits_{-\infty}^{\infty} \frac{(a^2-b^2)\mathrm{d}x}{f(x)^{3/2}\cdot\sqrt{x^2+ab}}\qquad\text{mit } f(x) := \frac{(t^2+a^2)\cdot(t^2+b^2)}{t^2},\nonumber
\end{align*}
\en{where again it holds $f(x) := \frac{(t^2+a^2)\cdot(t^2+b^2)}{t^2} = (2x)^2+(a+b)^2$ and thus:}%
\de{wobei wieder $f(x) := \frac{(t^2+a^2)\cdot(t^2+b^2)}{t^2} = (2x)^2+(a+b)^2$ gilt, also:}
\begin{align*}
   L(b,a)-L(a,b)&= \int\limits_{-\infty}^{\infty} \frac{(a^2-b^2)\mathrm{d}x}{((2x)^2+(a+b)^2)^{3/2}\cdot\sqrt{x^2+ab}}\\
    &= \frac{a^2-b^2}{8}\int\limits_{-\infty}^{\infty} \frac{\mathrm{d}x}{(x^2+a_1^2)^{3/2}\cdot\sqrt{x^2+b_1^2}}\\
    &= \frac{a^2-b^2}{8a_1^2}\cdot 2\int\limits_{0}^{\infty} \frac{\frac{a_1^2}{a_1^2+x^2}\mathrm{d}x}{\sqrt{(x^2+a_1^2)\cdot(x^2+b_1^2)}}
\end{align*}

\noindent \en{In this integral we recognize the representation (\ref{\en{EN}glg23}) of $L(b_1,a_1)$. This yields:}%
\de{In diesem Integral erkennen wir die Darstellung (\ref{\en{EN}glg23}) von $L(b_1,a_1)$. Es folgt:}
\begin{align*}
    L(b,a)-L(a,b) &= \frac{a^2-b^2}{4a_1^2}\cdot L(b_1,a_1) = \frac{(a-b)(a+b)}{(a+b)^2}\cdot L(b_1,a_1) \\
    &= \frac{a-b}{a+b}\cdot L(b_1,a_1)
\end{align*}
\en{Thus we have also proven the second equation.}%
\de{Somit ist auch die zweite Gleichung bewiesen.}
\end{proof}

\begin{thm}\label{\en{EN}sumthm}
\en{Denoting $S:=\sum_{j=1}^\infty 2^j\cdot c_j^2$ it holds:}%
\de{Mit $S:=\sum_{j=1}^\infty 2^j\cdot c_j^2$ gilt:}
$$2\cdot c_0^2\cdot L(a,b) = (c_0^2-S)\cdot I(a,b)$$
\end{thm}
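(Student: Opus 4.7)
The plan is to iterate the two identities of Prop.~\ref{ENsatzLI} along the entire AGM sequence. Since Prop.~\ref{ENthm2} tells us that $I(a_n,b_n)=I(a,b)=:I$ is invariant under one AGM step, and Prop.~\ref{ENThm1} guarantees $c_{n+1}^2<\frac{1}{4}c_n^2$, I expect the iteration to telescope into the series $\sum 2^j c_j^2$ with a remainder that vanishes geometrically.

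First I would apply Prop.~\ref{ENsatzLI} at every level $n$ (i.e.\ with $(a,b)$ replaced by $(a_n,b_n)$) and form the sum and difference of the two equations. Using $c_n^2=(a_n-b_n)(a_n+b_n)$ together with $(a_n-b_n)^2=4c_{n+1}^2$ (from equation~(\ref{ENglgcn})), multiplying through by $2c_n^2$ cleans the factors up into two recursions: the seed
$$2c_0^2\, L(a,b) \;=\; c_0^2\, I \;-\; 4 c_1^2\, L(b_1,a_1)$$
and, for $n\geq 1$,
$$2c_n^2\, L(b_n,a_n) \;=\; c_n^2\, I \;+\; 4 c_{n+1}^2\, L(b_{n+1},a_{n+1}).$$
A convenient feature is that after the first substitution the argument is always of the form $L(b_n,a_n)$ with $n\geq 1$, so the sign in the recursion never flips.

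Then I would iterate: each substitution $4c_n^2 L(b_n,a_n)=2c_n^2 I+8c_{n+1}^2 L(b_{n+1},a_{n+1})$ absorbs one extra power of $2$ into the coefficient. A routine induction on $N$ yields
$$2c_0^2\, L(a,b) \;=\; \Bigl(c_0^2 - \sum_{j=1}^{N} 2^j c_j^2\Bigr)\, I \;-\; 2^{N+2}\, c_{N+1}^2\, L(b_{N+1},a_{N+1}).$$

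Finally I would let $N\to\infty$. By Prop.~\ref{ENThm1} one has $c_{N+1}^2<4^{-(N+1)}c_0^2$, hence $2^{N+2}c_{N+1}^2<2^{-N}c_0^2\to 0$; and $L(b_{N+1},a_{N+1})$ is bounded by $I$, since the $L$-integrand is non-negative and pointwise dominated by the $I$-integrand. The remainder term therefore vanishes, and the identity $2c_0^2 L(a,b)=(c_0^2-S)\,I$ drops out. The only mildly delicate point is keeping the powers of $2$ straight during the induction; the decay estimate that kills the remainder is otherwise immediate.
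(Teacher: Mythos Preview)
Your proposal is correct and is essentially the paper's own argument: both iterate the two identities of Prop.~\ref{ENsatzLI} along the AGM sequence, telescope using the invariance $I(a_n,b_n)=I$ from Prop.~\ref{ENthm2}, and kill the remainder with the bound $c_{n+1}^2<4^{-(n+1)}c_0^2$ from Prop.~\ref{ENThm1} together with $L<I$. The only cosmetic difference is that the paper tracks $L(b_j,a_j)$ throughout and converts to $L(a,b)$ via $L(b,a)=I-L(a,b)$ at the very end, whereas you absorb that step into your seed equation; the telescoping and the remainder estimate are identical.
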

\begin{proof}
\en{From $4\cdot(a_1^2-b_1^2)=4\cdot\left(\frac{a+b}{2}\right)^2-4ab=(a+b)^2-4ab=(a-b)^2$ we deduce (using both equations of Prop.~\ref{\en{EN}satzLI}):}%
\de{Zunächst gilt $4\cdot(a_1^2-b_1^2)=4\cdot\left(\frac{a+b}{2}\right)^2-4ab=(a+b)^2-4ab=(a-b)^2$.
Daraus folgt (unter Nutzung beider Gleichungen aus Satz \ref{\en{EN}satzLI}):}
\begin{align*}
    4\cdot(a_1^2-b_1^2)\cdot L(b_1,a_1) &= (a-b)^2\cdot L(b_1,a_1)\\
    &=(a^2-b^2)\cdot\frac{a-b}{a+b}\cdot L(b_1,a_1)\\
    &=(a^2-b^2)\cdot\left(L(b,a)-L(a,b)\right)\\
   &= (a^2-b^2)\cdot\left(L(b,a)-(I(a,b)-L(b,a))\right)\\
&= (a^2-b^2)\cdot\left(2\cdot L(b,a)-I(a,b)\right)
\end{align*}
\en{With the definition of $c_n$ this reads:}%
\de{Mit der Definition der $c_n$ können wir das wie folgt abkürzen:}
\begin{align*}
    4 \cdot c_1^2\cdot L(b_1,a_1) &= c_0^2\cdot\left(2\cdot L(b,a)-I(a,b)\right)\\
    \Longrightarrow\quad 2\cdot c_0^2 \cdot L(b,a) - 4\cdot c_1^2 \cdot L(b_1,a_1) &= c_0^2\cdot I(a,b)
\end{align*}
\en{Thus it holds for all}\de{Also gilt für alle} $j\in\mathbb N$:
\begin{align*}
    2\cdot c_j^2 \cdot L(b_j,a_j) - 4\cdot c_{j+1}^2 \cdot L(b_{j+1},a_{j+1}) &= c_j^2\cdot I(a_j,b_j)
\end{align*}
\en{Here we multiply with $2^{j}$ and use $I(a_j,b_j)=I(a,b)$ from Prop.~\ref{\en{EN}thm2}:}%
\de{Hier multiplizieren wir noch mit $2^{j}$ und nutzen $I(a_j,b_j)=I(a,b)$ aus Satz \ref{\en{EN}thm2}:}
\begin{align*}
    2^{j+1}\cdot c_j^2 \cdot L(b_j,a_j) - 2^{j+2}\cdot c_{j+1}^2 \cdot L(b_{j+1},a_{j+1}) &= 2^j\cdot c_j^2\cdot I(a,b)
\end{align*}
\en{Now we add these equations for $0\leq j\leq n$ and obtain:}%
\de{Nun summieren wir diese Gleichungen für $0\leq j\leq n$ und erhalten:}
\begin{align}
    \sum_{j=0}^n 2^{j+1}\cdot c_j^2 \cdot L(b_j,a_j) - \sum_{j=0}^n 2^{j+2}\cdot c_{j+1}^2 \cdot L(b_{j+1},a_{j+1}) &= \sum_{j=0}^n 2^j\cdot c_j^2\cdot I(a,b)\label{\en{EN}teleskop}
\end{align}
\en{In the second sum we shift the index $k=j+1$:}%
\de{In der zweiten Summe führen wir einen Indexshift $k=j+1$ durch:}
\begin{align*}
    \sum_{j=0}^n 2^{j+2}\cdot c_{j+1}^2 \cdot L(b_{j+1},a_{j+1}) &= \sum_{k=1}^{n+1} 2^{k+1}\cdot c_{k}^2 \cdot L(b_{k},a_{k})
\end{align*}
\en{This shows that the left side of (\ref{\en{EN}teleskop}) is a telescoping series, thus nearly all terms cancel each other out:}%
\de{Somit erkennen wir, dass auf der linken Seite von (\ref{\en{EN}teleskop}) eine Teleskopsumme steht, in der sich fast alle Summanden gegenseitig auslöschen:}
\begin{align}
    2^{0+1}\cdot c_0^2 \cdot L(b_0,a_0) - 2^{n+2}\cdot c_{n+1}^2 \cdot L(b_{n+1},a_{n+1}) &= \sum_{j=0}^n 2^j\cdot c_j^2\cdot I(a,b)\label{\en{EN}telesk}
\end{align}
\en{Here we estimate $L(b_{n+1},a_{n+1})<I(b_{n+1},a_{n+1})=I(b,a)$ and use $c_{n+1}^2<4^{-n-1}c_0^2$ from Prop.~\ref{\en{EN}Thm1}:}%
\de{Hier schätzen wir noch $L(b_{n+1},a_{n+1})<I(b_{n+1},a_{n+1})=I(b,a)$ und nutzen $c_{n+1}^2<4^{-n-1}c_0^2$ aus Satz \ref{\en{EN}Thm1}:}
\begin{align*}
    2^{n+2}\cdot c_{n+1}^2 \cdot L(b_{n+1},a_{n+1}) < 2^{n+2}\cdot 4^{-n-1}\cdot c_0^2\cdot I(b,a) = 2^{-n}\cdot c_0^2\cdot I(b,a)
\end{align*}
\en{Thus the second term of (\ref{\en{EN}telesk}) tends to $0$ (for $n\rightarrow\infty$) and we obtain:}%
\de{Also geht der zweite Ausdruck aus (\ref{\en{EN}telesk}) gegen Null (für $n\rightarrow\infty$) und wir erhalten:}
\begin{align*}
    2^{0+1}\cdot c_0^2 \cdot L(b_0,a_0) &= \sum_{j=0}^\infty 2^j\cdot c_j^2\cdot I(a,b)\\
    \Longrightarrow\quad 2 c_0^2 \cdot L(b,a) &= (c_0^2+S)\cdot I(a,b)
\end{align*}
\en{Finally we use $L(b,a)=I(a,b)-L(a,b)$ from Prop.~\ref{\en{EN}satzLI} and obtain:}%
\de{Schließlich setzen wir noch $L(b,a)=I(a,b)-L(a,b)$ aus Satz \ref{\en{EN}satzLI} ein und erhalten}\belowdisplayskip=-12pt
\begin{align*}
    2 c_0^2 \cdot (I(a,b)-L(a,b)) &= (c_0^2+S)\cdot I(a,b)\\
    \Longrightarrow\quad 2\cdot c_0^2\cdot L(a,b) &= (c_0^2-S)\cdot I(a,b)
\end{align*}

\end{proof}
\begin{thm}\label{\en{EN}satzgamma}
\en{The Gamma function $\Gamma(x) := \int_0^\infty t^{x-1}\cdot e^{-t} \mathrm{d}t$ satisfies for $\re(x)>0$:}%
\de{Für die Gamma-Funktion $\Gamma(x) := \int_0^\infty t^{x-1}\cdot e^{-t} \mathrm{d}t$ gilt für $\re(x)>0$:}
\begin{align*}
    \Gamma(x+1) &= x\cdot \Gamma(x)\qquad\text{\en{and}\de{und}}\qquad
    \Gamma\left(\frac{1}{2}\right) = \sqrt{\pi}
\end{align*}
\end{thm}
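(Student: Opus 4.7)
The plan is to prove the two statements independently using only techniques already invoked in this paper: integration by parts for the functional equation and two-dimensional substitution for the value at $1/2$.

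For the functional equation, I would start from the definition $\Gamma(x+1)=\int_0^\infty t^x e^{-t}\mathrm{d}t$ and integrate by parts with $u=t^x$ and $\mathrm{d}v=e^{-t}\mathrm{d}t$, so that $\mathrm{d}u=x\cdot t^{x-1}\mathrm{d}t$ and $v=-e^{-t}$. The boundary term $\bigl[-t^x e^{-t}\bigr]_0^\infty$ vanishes at both endpoints: at $0$ because $\re(x)>0$ forces $t^x\to 0$, and at $\infty$ because $e^{-t}$ dominates any polynomial. What remains is exactly $x\int_0^\infty t^{x-1}e^{-t}\mathrm{d}t=x\cdot\Gamma(x)$.

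For $\Gamma(1/2)=\sqrt{\pi}$, I would first substitute $t=u^2$ (hence $\mathrm{d}t=2u\,\mathrm{d}u$) in $\Gamma(1/2)=\int_0^\infty t^{-1/2}e^{-t}\mathrm{d}t$ to obtain $\Gamma(1/2)=2\int_0^\infty e^{-u^2}\mathrm{d}u$. To evaluate the Gaussian integral I would square it and apply Fubini:
\[
\left(\int_0^\infty e^{-u^2}\mathrm{d}u\right)^2=\int_0^\infty\!\int_0^\infty e^{-(x^2+y^2)}\mathrm{d}x\,\mathrm{d}y,
\]
then change to polar coordinates $(x,y)=(r\cos\varphi,r\sin\varphi)$ with Jacobian $r$ on the first quadrant $\varphi\in[0,\pi/2]$, $r\in[0,\infty)$. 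This gives $\int_0^{\pi/2}\mathrm{d}\varphi\cdot\int_0^\infty r\cdot e^{-r^2}\mathrm{d}r=\tfrac{\pi}{2}\cdot\tfrac{1}{2}=\tfrac{\pi}{4}$, so $\int_0^\infty e^{-u^2}\mathrm{d}u=\sqrt{\pi}/2$ and therefore $\Gamma(1/2)=\sqrt{\pi}$.

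I do not anticipate a real obstacle: both arguments are classical and use precisely the integration techniques (integration by parts, one- and two-dimensional substitution) that the author has already stated as prerequisites at the beginning of the chapter. The only point deserving a brief remark is the vanishing of the boundary term at $0$ when $\re(x)$ is small, and the standard justification of the polar substitution via the Transformationssatz, both of which can be handled in one sentence each.
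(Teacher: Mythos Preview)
Your proposal is correct and matches the paper's proof almost exactly: integration by parts for the functional equation, the substitution $t=u^2$ followed by squaring and polar coordinates for $\Gamma(1/2)$. The only cosmetic difference is that the paper first extends $2\int_0^\infty e^{-u^2}\,\mathrm{d}u$ to $\int_{-\infty}^\infty e^{-u^2}\,\mathrm{d}u$ and then integrates in polar coordinates over all of $\mathbb{R}^2$ with $\varphi\in[0,2\pi]$, whereas you stay on the first quadrant with $\varphi\in[0,\pi/2]$; the bookkeeping of the factor $4$ is equivalent.
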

\begin{proof}
\en{We prove the functional equation integrating by parts:}%
\de{Die Funktionalgleichung folgt durch partielle Integration:}
\begin{align*}
    \Gamma(x+1) = \int\limits_0^\infty t^x\cdot e^{-t} \mathrm{d}t &= - \int\limits_0^\infty x\cdot t^{x-1}\cdot \left(-e^{-t}\right) \mathrm{d}t + \left[t^x\cdot\left(-e^{-t}\right) \right]_0^\infty = x\cdot \Gamma(x) 
\end{align*}
\en{To calculate $\Gamma(1/2)$ we substitute $s=\sqrt{t}$ and obtain $t=s^2$ and $\frac{\mathrm{d}t}{\mathrm{d}s}=2s$:}%
\de{Für $\Gamma(1/2)$ substituieren wir zunächst $s=\sqrt{t}$ und erhalten $t=s^2$ und $\frac{\mathrm{d}t}{\mathrm{d}s}=2s$:}
\begin{align*}
    \Gamma\left(\frac{1}{2}\right) &= \int\limits_0^\infty t^{-1/2}\cdot e^{-t} \mathrm{d}t
    =\int\limits_0^\infty s^{-1}\cdot e^{-s^2}\cdot 2s \mathrm{d}s = \int\limits_{-\infty}^\infty e^{-s^2} \mathrm{d}s
\end{align*}
\en{We square this integral to obtain a twodimensional integral:}%
\de{Dieses Integral quadrieren wir, um auf ein zweidimensionales Integral zu kommen:}
\begin{align*}
    \left(\Gamma\left(\frac{1}{2}\right)\right)^2
    = \left(\int\limits_{-\infty}^\infty e^{-x^2}\mathrm{d}x\right)\cdot\left(\int\limits_{-\infty}^\infty e^{-y^2}\mathrm{d}y\right)
    = \int\limits_{-\infty}^\infty\int\limits_{-\infty}^\infty e^{-(x^2+y^2)} \mathrm{d}x\mathrm{d}y
\end{align*}
\en{Now we use polar coordinates $x = r\cdot \cos \Phi$ and $y = r\cdot \sin \Phi$:}%
\de{Hier bietet sich ein Übergang zu Polarkoordinaten an, also $x = r\cdot \cos \Phi$ und $y = r\cdot \sin \Phi$:}
\begin{align*}
    \left(\Gamma\left(\frac{1}{2}\right)\right)^2 &= \int\limits_0^{\infty}\int\limits_0^{2\pi} e^{-r^2}\cdot r\mathrm{d}\Phi\mathrm{d}r = \int\limits_0^{\infty} e^{-r^2}\cdot 2\pi r\mathrm{d}r
    = \left[-\pi\cdot e^{-r^2}\right]_0^\infty = \pi
\end{align*}
\en{Thus the value of $\Gamma(1/2) = \sqrt{\pi}$ is also proven.}%
\de{Also ist auch der Wert $\Gamma(1/2) = \sqrt{\pi}$ bewiesen.}
\end{proof}

\begin{thm}\label{\en{EN}gammabeta}
\en{The Beta function $B(u,v) := \int_0^1 t^{u-1}\cdot(1-t)^{v-1} \mathrm{d}t$ satisfies for $\re(u)>0$ and $\re(v)>0$:}%
\de{Für die Betafunktion $B(x,y) := \int_0^1 t^{x-1}\cdot(1-t)^{y-1} \mathrm{d}t$ gilt im Bereich $\re(u)>0$ und $\re(v)>0$:}
\begin{align*}
    B(u,v) &= \frac{\Gamma(u)\cdot\Gamma(v)}{\Gamma(u+v)}
\end{align*}
\end{thm}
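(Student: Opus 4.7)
The plan is to mimic the strategy used for $\Gamma(1/2)$ in Prop.~\ref{\en{EN}satzgamma}: I would form a two-dimensional integral from a product of two one-dimensional integrals, and then apply a change of variables that decouples the result into a Beta factor and a Gamma factor. Concretely, I would write
\[\Gamma(u)\cdot\Gamma(v) = \int_0^\infty\!\int_0^\infty s^{u-1}\,t^{v-1}\,e^{-(s+t)}\,\mathrm{d}s\,\mathrm{d}t,\]
which is legitimate by Fubini since for real $u,v>0$ the integrand is positive; the complex parameter case reduces to the positive one via $|s^{u-1}| = s^{\re(u)-1}$.

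The decisive step is the substitution $s = r\cdot x$, $t = r\cdot(1-x)$ with $r\in(0,\infty)$ and $x\in(0,1)$. This is a bijection onto $(0,\infty)^2$, and a short Jacobian computation gives $\mathrm{d}s\,\mathrm{d}t = r\,\mathrm{d}r\,\mathrm{d}x$. Under this substitution the exponent becomes $-r$ and the powers split cleanly as $s^{u-1}\,t^{v-1} = r^{u+v-2}\cdot x^{u-1}(1-x)^{v-1}$, so the integrand factors completely and Fubini separates the double integral into
\[\left(\int_0^\infty r^{u+v-1}\,e^{-r}\,\mathrm{d}r\right)\cdot\left(\int_0^1 x^{u-1}(1-x)^{v-1}\,\mathrm{d}x\right) = \Gamma(u+v)\cdot B(u,v),\]
and dividing by $\Gamma(u+v)$ (which is nonzero for $\re(u+v)>0$) yields the claim.

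The only real obstacle is spotting the right substitution: the geometric idea is that $(s,t)\mapsto(s+t,\,s/(s+t))$ slices the first quadrant along rays from the origin parametrized by $x\in(0,1)$, and on each such ray the measure $e^{-(s+t)}\,\mathrm{d}s\,\mathrm{d}t$ factors into a radial Gamma-piece and an angular Beta-piece. Once the substitution is identified, the rest is bookkeeping of exactly the same flavour as the polar-coordinate step in the proof of $\Gamma(1/2) = \sqrt{\pi}$, and relies only on the two-dimensional substitution rule (Transformationssatz) already invoked earlier in the paper.
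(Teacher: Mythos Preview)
Your proposal is correct and follows essentially the same route as the paper: form the double integral for $\Gamma(u)\Gamma(v)$, apply the substitution $(s,t)\mapsto(s+t,\,s/(s+t))$ with Jacobian equal to the radial variable, and factor into $\Gamma(u+v)\cdot B(u,v)$. The paper's argument is identical up to relabelling of variables (it uses $x=s+t$, $y=t/(s+t)$), and your added remarks on Fubini and the reduction to real parameters are welcome but not a departure in method.
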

\begin{proof}
\en{We start with $\Gamma(u)\cdot\Gamma(v)$:}%
\de{Wir beginnen mit $\Gamma(u)\cdot\Gamma(v)$:}
\begin{align*}
  \Gamma(u)\cdot\Gamma(v) &= \int\limits_0^\infty t^{u-1}\cdot e^{-t} \mathrm{d}t \cdot \int\limits_0^\infty s^{v-1}\cdot e^{-s} \mathrm{d}s
  = \int\limits_0^\infty\int\limits_0^\infty t^{u-1}s^{v-1}\cdot e^{-t-s} \mathrm{d}t \mathrm{d}s
\end{align*}
\en{The substitution of $\left\{\begin{aligned}s &= x\cdot (1-y)\\t &= x\cdot y\end{aligned}\right\}$ 
or $\left\{\begin{aligned}x &= s+t\\y&=\frac{t}{x}=\frac{t}{s+t}\end{aligned}\right\}$ 
yields $0 < x < \infty$ and $0<y<1$.
The Jacobian matrix of this substitution is}%
\de{Mit der Substitution $\left\{\begin{aligned}s &= x\cdot (1-y)\\t &= x\cdot y\end{aligned}\right\}$ 
bzw. $\left\{\begin{aligned}x &= s+t\\y&=\frac{t}{x}=\frac{t}{s+t}\end{aligned}\right\}$ 
gilt $0 < x < \infty$ und $0<y<1$.
Die Jacobi-Matrix der Substitution ist}
\begin{align*}
    J&=\left(\begin{aligned}\frac{\mathrm{d}s}{\mathrm{d}x} && \frac{\mathrm{d}s}{\mathrm{d}y}\\[0.5ex] \frac{\mathrm{d}t}{\mathrm{d}x} && \frac{\mathrm{d}t}{\mathrm{d}y}\end{aligned}\right)
    =\left(\begin{aligned}1-y && -x\\[0.5ex]
    y && x\end{aligned}\right)\quad\Longrightarrow\quad \det(J) = x(1-y)+xy=x
\end{align*}
\en{This yields:}\de{Also gilt:}
\begin{align*}
   \Gamma(u)\cdot\Gamma(v) &= \int\limits_{x=0}^\infty\int\limits_{y=0}^1 (xy)^{u-1}\cdot (x(1-y))^{v-1}\cdot e^{-x} \cdot x\mathrm{d}y \mathrm{d}x\\
    &=\int\limits_0^\infty x^{u-1+v-1+1}\cdot e^{-x} \mathrm{d}x\cdot \int\limits_0^1 y^{u-1}(1-y)^{v-1}\mathrm{d}y= \Gamma(u+v)\cdot B(u,v)
\end{align*}
\en{A final division by $\Gamma(u+v)$ proves the equation of Prop.~\ref{\en{EN}gammabeta}.}%
\de{Eine abschließende Division durch $\Gamma(u+v)$ liefert die zu beweisende Gleichung.}
\end{proof}
\begin{thm}\label{\en{EN}satzwert}
\en{It holds}\de{Es gilt}
\begin{align*}
    L(\sqrt{2},1)\cdot I(\sqrt{2},1) &= \frac{\pi}{4}
\end{align*}
\end{thm}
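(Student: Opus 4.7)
The plan is to evaluate each of the two integrals $I(\sqrt 2,1)$ and $L(\sqrt 2,1)$ separately in closed form using the Beta function, and then multiply. The crucial observation is that at $a=\sqrt 2$, $b=1$ we have $a^2-b^2=b^2$, so the quadratic under the square root simplifies to $a^2\cos^2\Phi+b^2\sin^2\Phi=1+\cos^2\Phi$, which together with $\sin^2\Phi=1-\cos^2\Phi$ will allow the substitution $u=\cos^2\Phi$ to produce a factor $\sqrt{(1-u)(1+u)}=\sqrt{1-u^2}$. This is what makes this special (lemniscatic) case integrable in terms of $\Gamma$-values.

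First I would substitute $u=\cos^2\Phi$ in both integrals, giving $d\Phi=-\tfrac{du}{2\sqrt{u(1-u)}}$ and
$$I(\sqrt 2,1)=\frac12\int_0^1\frac{du}{\sqrt{u(1-u^2)}},\qquad L(\sqrt 2,1)=\frac12\int_0^1\frac{\sqrt{u}\,du}{\sqrt{1-u^2}}.$$
Next I would apply $u=v^2$ in each (so $du=2v\,dv$) to eliminate the $\sqrt u$ and get
$$I(\sqrt 2,1)=\int_0^1\frac{dv}{\sqrt{1-v^4}},\qquad L(\sqrt 2,1)=\int_0^1\frac{v^2\,dv}{\sqrt{1-v^4}}.$$
Finally the substitution $w=v^4$ (i.e.\ $v=w^{1/4}$, $dv=\tfrac14 w^{-3/4}dw$) converts these to Beta integrals:
$$I(\sqrt 2,1)=\frac14\int_0^1 w^{-3/4}(1-w)^{-1/2}\,dw=\frac14 B\!\left(\tfrac14,\tfrac12\right),$$
$$L(\sqrt 2,1)=\frac14\int_0^1 w^{-1/4}(1-w)^{-1/2}\,dw=\frac14 B\!\left(\tfrac34,\tfrac12\right).$$

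Using Prop.~\ref{\en{EN}gammabeta} to rewrite each $B$ in terms of $\Gamma$, I would form the product
$$I(\sqrt 2,1)\cdot L(\sqrt 2,1)=\frac{1}{16}\cdot\frac{\Gamma(\tfrac14)\Gamma(\tfrac12)}{\Gamma(\tfrac34)}\cdot\frac{\Gamma(\tfrac34)\Gamma(\tfrac12)}{\Gamma(\tfrac54)}=\frac{\Gamma(\tfrac12)^2\,\Gamma(\tfrac14)}{16\,\Gamma(\tfrac54)},$$
where the $\Gamma(\tfrac34)$ cancels conveniently. The functional equation $\Gamma(\tfrac54)=\tfrac14\Gamma(\tfrac14)$ from Prop.~\ref{\en{EN}satzgamma} then cancels $\Gamma(\tfrac14)$ as well, and $\Gamma(\tfrac12)^2=\pi$ yields the claimed value $\pi/4$.

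I do not expect a genuine obstacle here: the two substitutions are elementary and the special algebraic structure of the case $a^2-b^2=b^2$ makes everything collapse to standard Beta integrals. The one point requiring a little care is bookkeeping of exponents in the Beta parameters, since a mistake there would spoil the cancellation of $\Gamma(\tfrac14)$ and $\Gamma(\tfrac34)$ that makes the product so clean.
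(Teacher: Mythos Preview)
Your proposal is correct and follows essentially the same route as the paper. The only cosmetic difference is that the paper reaches the integrals $\int_0^1 v^2(1-v^4)^{-1/2}\,dv$ and $\int_0^1 (1-v^4)^{-1/2}\,dv$ in one step via $v=\cos\Phi$, whereas you arrive at the same point by composing $u=\cos^2\Phi$ with $u=v^2$; from there the substitution $w=v^4$, the Beta--Gamma identity, and the cancellations via $\Gamma(\tfrac54)=\tfrac14\Gamma(\tfrac14)$ and $\Gamma(\tfrac12)^2=\pi$ are identical to the paper's argument.
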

\begin{proof}
\en{First, $\sin^2\Phi=1-\cos^2\Phi$ yields:}%
\de{Zunächst gilt wegen $\sin^2\Phi=1-\cos^2\Phi$:}
\begin{align*}
L(\sqrt{2},1) &= \int\limits_0^{\pi/2}\frac{\cos^2(\Phi)\mathrm{d}\Phi}{\sqrt{2\cos^2(\Phi)+\sin^2(\Phi)}}
= \int\limits_0^{\pi/2}\frac{\cos^2(\Phi)\mathrm{d}\Phi}{\sqrt{1+\cos^2(\Phi)}}\\
I(\sqrt{2},1) &= \int\limits_0^{\pi/2}\frac{\mathrm{d}\Phi}{\sqrt{2\cos^2(\Phi)+\sin^2(\Phi)}}
= \int\limits_0^{\pi/2}\frac{\mathrm{d}\Phi}{\sqrt{1+\cos^2(\Phi)}}
\end{align*}
\en{Then we substitute $x=\cos\Phi$ with $\frac{\mathrm{d}x}{\mathrm{d}\Phi} = -\sin\Phi=-\sqrt{1-\cos^2\Phi}=-\sqrt{1-x^2}$ and thus $\frac{\mathrm{d}\Phi}{\mathrm{d}x}=\frac{-1}{\sqrt{1-x^2}}$:}%
\de{Dann substituieren wir $x=\cos\Phi$, wobei $\frac{\mathrm{d}x}{\mathrm{d}\Phi} = -\sin\Phi=-\sqrt{1-\cos^2\Phi}=-\sqrt{1-x^2}$ und somit $\frac{\mathrm{d}\Phi}{\mathrm{d}x}=\frac{-1}{\sqrt{1-x^2}}$ gilt:}
\begin{align*}
L(\sqrt{2},1) &=  \int\limits_0^{\pi/2}\frac{\cos^2(\Phi)\mathrm{d}\Phi}{\sqrt{1+\cos^2(\Phi)}}
 = \int\limits_1^0\frac{x^2}{\sqrt{1+x^2}}\cdot\frac{-\mathrm{d}x}{\sqrt{1-x^2}}
 =\int\limits_0^1\frac{x^2\mathrm{d}x}{\sqrt{1-x^4}}\\
I(\sqrt{2},1) &=  \int\limits_0^{\pi/2}\frac{\mathrm{d}\Phi}{\sqrt{1+\cos^2(\Phi)}}
 = \int\limits_1^0\frac{1}{\sqrt{1+x^2}}\cdot\frac{-\mathrm{d}x}{\sqrt{1-x^2}}
 =\int\limits_0^1\frac{\mathrm{d}x}{\sqrt{1-x^4}}
\end{align*}
\en{Next we substitute $x=t^{1/4}$ with $\frac{\mathrm{d}x}{\mathrm{d}t}= \frac{1}{4}\cdot t^{-3/4}$ to obtain the Beta function $B(x,y)$ from Prop.~\ref{\en{EN}gammabeta}:}%
\de{Schließlich substituieren wir $x=t^{1/4}$ mit $\frac{\mathrm{d}x}{\mathrm{d}t}= \frac{1}{4}\cdot t^{-3/4}$, um auf die Betafunktion $B(x,y)$ aus Satz \ref{\en{EN}gammabeta} zu kommen:}
\begin{align*}
L(\sqrt{2},1) &=\int\limits_0^1\frac{t^{1/2}}{\sqrt{1-t}}\cdot\frac{\mathrm{d}t}{4\cdot t^{3/4}}
    =\int\limits_0^1\frac{1}{4}\cdot t^{-1/4}\cdot(1-t)^{-1/2}\mathrm{d}t 
    = \frac{1}{4}\cdot B\lk\frac{3}{4},\frac{1}{2}\rk\\
I(\sqrt{2},1) &=\int\limits_0^1\frac{1}{\sqrt{1-t}}\cdot\frac{\mathrm{d}t}{4\cdot t^{3/4}}
    =\int\limits_0^1\frac{1}{4}\cdot t^{-3/4}\cdot(1-t)^{-1/2}\mathrm{d}t 
    = \frac{1}{4}\cdot B\lk\frac{1}{4},\frac{1}{2}\rk
\end{align*}
\en{Now we replace the Beta function by Gamma functions as in Prop.~\ref{\en{EN}gammabeta} and use the properties of the Gamma function from Prop.~\ref{\en{EN}satzgamma}:}%
\de{Jetzt ersetzen wir die Betafunktionen durch Gammafunktionen mit Satz \ref{\en{EN}gammabeta} und verwenden dann die Eigenschaften der Gammafunktion aus Satz \ref{\en{EN}satzgamma}:}\belowdisplayskip=-8pt
\begin{align*}
L(\sqrt{2},1)\cdot I(\sqrt{2},1)
  &= \frac{1}{4}\cdot B\lk\frac{3}{4},\frac{1}{2}\rk\cdot\frac{1}{4}\cdot B\lk\frac{1}{4},\frac{1}{2}\rk\\
  &= \frac{1}{16}\cdot\frac{\Gamma(3/4)\cdot\Gamma(1/2)}{\Gamma(5/4)}\cdot\frac{\Gamma(1/4)\cdot\Gamma(1/2)}{\Gamma(3/4)}\\
  &= \frac{1}{16}\cdot\frac{\Gamma(3/4)\cdot\Gamma(1/2)}{1/4\cdot\Gamma(1/4)}\cdot\frac{\Gamma(1/4)\cdot\Gamma(1/2)}{\Gamma(3/4)}\\
  &= \frac{1}{4}\cdot \Gamma(1/2)^2 = \frac{1}{4}\cdot (\sqrt{\pi})^2 = \frac{\pi}{4}
\end{align*}
\end{proof}

\begin{proof}[\en{Proof of}\de{Beweis des} Thm.~\ref{\en{EN}tgauss}]\label{\en{EN}proofgaussi}
\en{The integrals $I(a,b)$ and $L(a,b)$ satisfy:}%
\de{Zunächst gilt für $I(a,b)$ und ebenso für $L(a,b)$:}
$$I(\lambda\cdot a,\lambda\cdot b) = \int\limits_0^{\pi/2}\frac{\mathrm{d}\Phi}{\sqrt{\lambda^2a^2\cos^2(\Phi)+\lambda^2b^2\sin^2(\Phi)}}=\frac{1}{|\lambda|}\cdot I(a,b)$$
\en{This implies $I(1,1/\sqrt{2})=\sqrt{2}\cdot I(\sqrt{2},1)$ and $L(1,1/\sqrt{2})=\sqrt{2}\cdot L(\sqrt{2},1)$ and then with Prop.~\ref{\en{EN}satzwert}:}%
\de{Hieraus folgt $I(1,1/\sqrt{2})=\sqrt{2}\cdot I(\sqrt{2},1)$ und $L(1,1/\sqrt{2})=\sqrt{2}\cdot L(\sqrt{2},1)$ und dann mit Satz \ref{\en{EN}satzwert}:}
$$L(1,1/\sqrt{2})\cdot I(1,1/\sqrt{2}) = \sqrt{2}^2\cdot L(\sqrt{2},1)\cdot I(\sqrt{2},1) = 2\cdot \frac{\pi}{4} = \frac{\pi}{2}$$
\en{Prop.~\ref{\en{EN}sumthm} yields}\de{Satz \ref{\en{EN}sumthm} liefert}
\begin{align*}
    2\cdot c_0^2\cdot L(a,b)\cdot I(a,b) &= (c_0^2-S)\cdot I(a,b)^2
\end{align*}
\en{Here we use $a=1$ and $b=1/\sqrt{2}$, thus $c_0^2=a^2-b^2=1/2$:}%
\de{Hier setzen wir $a=1$ und $b=1/\sqrt{2}$ ein, also $c_0^2=a^2-b^2=1/2$:}
\begin{align*}
    2\cdot \frac 1 2 \cdot \frac \pi 2 &= \left(\frac 1 2 - S\right) \cdot I(1,1/\sqrt{2})^2
\end{align*}
\en{And now Prop.~\ref{\en{EN}satziagm} proves:}%
\de{Nun verwenden wir Satz \ref{\en{EN}satziagm}:}
\begin{align*}
    \frac \pi 2 &= \left(\frac 1 2 - S\right) \cdot \left(\frac{\pi}{2\AGM(1,1/\sqrt{2})}\right)^2
\end{align*}
\en{Solving this for $\pi$ proves the Gaussian formula:}%
\de{Wenn wir diese Formel nach $\pi$ auflösen, erhalten wir die Gauß'sche Formel:}
$$\pi = \frac{4\cdot \AGM(1;1/\sqrt{2})^2}{1-2\cdot\sum_{j=1}^\infty 2^j\cdot c_j^2}$$
\en{The numerator of $p_N$ is $(a_N+b_N)^2=4\cdot a_{N+1}^2$ which converges to $4\cdot \AGM(1;1/\sqrt{2})^2$ (cf.~Prop.~\ref{\en{EN}Thm1}), thus the sequence $p_N$ converges to $\pi$.}%
\de{Der Zähler der Folge $p_N$ ist $(a_N+b_N)^2=4\cdot a_{N+1}^2$ und konvergiert (Satz \ref{\en{EN}Thm1}) gegen\linebreak $4\cdot \AGM(1;1/\sqrt{2})^2$, also konvergiert die Folge $p_N$ gegen $\pi$.}
\end{proof}

\begin{bem}
\en{Now we have proven that the Brent-Salamin algorithm approximates $\pi$.
We have already proven the equivalence of the three algorithms, thus the two algorithms by the Borwein brothers also approximate $\pi$.}%
\de{Nun haben wir bewiesen, dass der Brent-Salamin-Algorithmus gegen $\pi$ konvergiert.
Wir haben bereits die Äquivalenz der drei Algorithmen bewiesen, also folgt dass auch die beiden Algorithmen der Borwein-Brüder gegen $\pi$ konvergieren.}
\end{bem}

\vfill\pagebreak
\section{\en{Proof of Brent-Salamin's Quadratic Convergence}\de{Beweis der quadratischen Konvergenz}}
\label{\en{EN}kapkonv}
\renewcommand{\leftmark}{\en{Proof of Brent-Salamin's Quadratic Convergence}\de{Beweis der quadratischen Konvergenz von Brent-Salamin}}
\en{This chapter does not use the fact that the Brent-Salamin sequence $p_n$ converges to $\pi\approx 3\ko14159$. We only use the monotonic convergence of $a_n$ and $b_n$ from Prop.~\ref{\en{EN}Thm1} and use the symbol $\pi$ as a placeholder for the limit of this sequence $p_n$.}%
\de{Dieses Kapitel setzt nicht voraus, dass die Brent-Salamin-Folge gegen $\pi\approx 3\ko14159$ konvergiert. Wir setzen nur die monotone Konvergenz von $a_n$ und $b_n$ aus Satz \ref{\en{EN}Thm1} voraus und verwenden $\pi$ als Platzhalter für den Grenzwert von $p_n$.}\\
\begin{theo}\label{\en{EN}theo2}
\en{The sequence $\displaystyle p_n:=\frac{(a_n+b_n)^2}{1-2\cdot\sum_{j=1}^n 2^j c_j^2}$ of the Brent-Salamin algorithm converges quadratically to its limit which we denote with the symbol $\pi$:}%
\de{Die Folge $\displaystyle p_n:=\frac{(a_n+b_n)^2}{1-2\cdot\sum_{j=1}^n 2^j c_j^2}$ des Brent-Salamin-Algorithmus konvergiert quadratisch gegen ihren Grenzwert, den wir mit $\pi$ bezeichnen:}
$$|\pi-p_{n+1}| < 0\ko075 \cdot |\pi-p_{n}|^2$$
\en{In particular, the number of significant digits is approximately doubled with each iteration, but one has to do all calculations with the desired accuracy.}%
\de{Insbesondere wird die Anzahl gültiger Stellen mit jeder Iteration ungefähr verdoppelt, wobei man von Anfang an mit der gewünschten Zielgenauigkeit rechnen muss.}
\end{theo}
\begin{proof}
\noindent \en{First we denote the (de)nominator of the Brent-Salamin sequence with $X_n$ and $Y_n$.
In Prop.~\ref{\en{EN}Thm1} we proved that $a_{n}\rightarrow\AGM(1,1/\sqrt{2})$, thus it holds:}%
\de{Zunächst benennen wir Zähler und Nenner der Brent-Salamin-Folge mit $X_n$ und $Y_n$.
In Satz \ref{\en{EN}Thm1} haben wir bewiesen, dass $a_{n}\rightarrow\AGM(1,1/\sqrt{2})$ gilt, also folgt:}
\begin{align*}
    X_n &:= (a_n+b_n)^2=4\cdot a_{n+1}^2 &\longrightarrow&& X&:=4\cdot\AGM^2(1,1/\sqrt{2})\\
    Y_n &:= 1-2\cdot\sum_{j=1}^n 2^j\cdot c_j^2 &\longrightarrow&& Y&:=1-2\cdot\sum_{j=1}^\infty 2^j\cdot c_j^2\\
    p_n&:=X_n/Y_n&\longrightarrow&& \pi&:=X/Y
\end{align*}
\en{Now we denote the differences with}%
\de{Wir bezeichnen jetzt die Abweichungen mit}
\begin{align*}
    \varepsilon_n&:=X_n-X=4\cdot a_{n+1}^2-4\AGM^2(1,1/\sqrt{2})\\
    \delta_n&:=Y_n-Y = 2\cdot\sum_{j=n+1}^\infty 2^j\cdot c_j^2
\end{align*}
\en{In Prop.~\ref{\en{EN}Thm1} we proved that $a_{n+1}>\AGM(a,b)>b_{n+1}$ holds for all $n$. This yields:}%
\de{Weil für alle $n$ gilt $a_{n+1}>\AGM(a,b)>b_{n+1}$ (vgl. Satz \ref{\en{EN}Thm1}) folgt}
\begin{align*}
    0 < \varepsilon_n = 4\cdot a_{n+1}^2-4\AGM^2(1,1/\sqrt{2}) < 4\cdot a_{n+1}^2-4\cdot b_{n+1}^2= 4\cdot c_{n+1}^2
\end{align*}
\en{Since the summation of $\delta_n$ contains only positive terms, it holds $\delta_n>2\cdot2^{n+1}\cdot c_{n+1}^2$.
Finally we have proven in Prop.~\ref{\en{EN}Thm1} that $c_{n+1}^2<c_n^2/4$, thus we can use the geometric series to estimate $\delta_n$ (by setting the summation index to $j=n+1+k$):}%
\de{Weiter werden bei $\delta_n$ nur positive Zahlen summiert, also gilt $\delta_n>2\cdot2^{n+1}\cdot c_{n+1}^2$.
Schließlich ist $c_{n+1}^2<c_n^2/4$ (vgl. Satz \ref{\en{EN}Thm1}), also kann die Summe in $\delta_n$ mit der geometrischen Reihe abgeschätzt werden (setze hierfür den Summationsindex $j=n+1+k$):}
\begin{align*}
    \delta_n&= 2\cdot\sum_{k=0}^\infty 2^{n+1+k}\cdot c_{n+1+k}^2< 2\cdot\sum_{k=0}^\infty 2^{n+1+k}\cdot 4^{-k}\cdot c_{n+1}^2\\
    &=2^{n+2}\cdot c_{n+1}^2 \cdot\sum_{k=0}^\infty \left(\frac{1}{2}\right)^k = 2^{n+2}\cdot c_{n+1}^2 \cdot \frac{1}{1-\frac{1}{2}}
    = 2^{n+3}\cdot c_{n+1}^2
\end{align*}
\en{Thus we have proven:}%
\de{Insgesamt haben wir also bewiesen:}
\begin{align}\label{\en{EN}sechseins}
    0 < \varepsilon_n < 4\cdot c_{n+1}^2 \leq 
    2^{n+2}\cdot c_{n+1}^2 < \delta_n < 2^{n+3}\cdot c_{n+1}^2
\end{align}
\en{The difference between $p_n$ and its limit $\pi$ is:}%
\de{Für die Abweichung zwischen dem Folgenglied $p_n$ und dem Grenzwert $\pi$ gilt:}
\begin{align*}
    \pi-p_n &:= \frac{X}{Y}-\frac{X_n}{Y_n}
    = \left(\frac{X}{Y}-\frac{X}{Y_n}\right) + \left(\frac{X}{Y_n}-\frac{X_n}{Y_n}\right)\\
   &= \left(\frac{X}{Y}-\frac{X}{Y+\delta_n}\right) + \left(\frac{X}{Y_n}-\frac{X+\varepsilon_n}{Y_n}\right)\\
    &= \left(\frac{X\cdot(Y+\delta_n)-X\cdot Y}{Y\cdot(Y+\delta_n)}\right) + \left(\frac{-\varepsilon_n}{Y_n}\right)\\
    &= \left(\frac{X\cdot\delta_n}{Y\cdot(Y+\delta_n)}\right) - \frac{\varepsilon_n}{Y_n} = \frac{\pi\cdot\delta_n}{Y_n}- \frac{\varepsilon_n}{Y_n}
\end{align*}
\en{Here we use $1>Y_n>Y>0$ and $\delta_n>\varepsilon_n>0$ to obtain:}%
\de{Weiter ist $1>Y_n>Y>0$ und $\delta_n>\varepsilon_n>0$, also gilt:}
\begin{align*}
\frac{\pi\cdot\delta_n}{Y_n}- \frac{\varepsilon_n}{Y_n} &=  |\pi-p_n| <  \frac{\pi\cdot\delta_n}{Y_n}\\
\Longrightarrow\quad\frac{\pi\cdot\delta_n}{1}- \frac{\varepsilon_n}{Y} &<  |\pi-p_n| <  \frac{\pi\cdot\delta_n}{Y}
\end{align*}
\en{Now, eq.~(\ref{\en{EN}sechseins}) yields:}%
\de{Dann folgt mit (\ref{\en{EN}sechseins}):}
\begin{align}
    \frac{\pi\cdot2^{n+2}\cdot c_{n+1}^2}{1} - \frac{4\cdot c_{n+1}^2}{Y} &< |\pi-p_n| < \frac{\pi\cdot 2^{n+3}\cdot c_{n+1}^2}{Y}\nonumber\\
    \left(\pi\cdot2^{n+2} - \frac{\pi}{\AGM^2}\right)\cdot c_{n+1}^2 &< |\pi-p_n| < \left(\frac{\pi^2}{\AGM^2}\cdot 2^{n+1}\right)\cdot c_{n+1}^2\label{\en{EN}sechszwei}
\end{align}
\en{In the proof of Prop.~\ref{\en{EN}Thm1} we proved $c_{n+1}^2 =(a_n-b_n)^2/4$. Thus it holds:}%
\de{Im Beweis von Satz \ref{\en{EN}Thm1} haben wir $c_{n+1}^2 =(a_n-b_n)^2/4$ bewiesen. Hieraus folgt:}
\begin{align*}
    c_{n+1}^2 &= \frac{(a_n-b_n)^2}{4} = \frac{(a_n-b_n)^2\cdot(a_n+b_n)^2}{4\cdot(a_n+b_n)^2} 
    = \frac{(a_n^2-b_n^2)^2}{16\cdot\left(\frac{a_n+b_n}{2}\right)^2} = \frac{c_n^4}{16\cdot a_{n+1}^2}
\end{align*}
\en{Here we see the quadratic convergence:}%
\de{In dieser Zeile erkennen wir die quadratische Konvergenz:}
\begin{align}\label{\en{EN}dreidrei}
    c_{n+1}^2 = \frac{(c_n^2)^2}{16\cdot a_{n+1}^2} < \frac{c_n^4}{16\cdot \AGM^2}
\end{align}
\en{Using first (\ref{\en{EN}sechszwei}) and then (\ref{\en{EN}dreidrei}) yields:}%
\de{Hieraus folgt mit (\ref{\en{EN}sechszwei}):}
\begin{align}
    \frac{|\pi-p_{n+1}|}{|\pi-p_{n}|^2} &< \frac{\left(\frac{\pi^2}{\AGM^2}\cdot 2^{n+2}\right)\cdot c_{n+2}^2}{\left(\pi\cdot2^{n+2} - \frac{\pi}{\AGM^2}\right)^2\cdot c_{n+1}^4}
    <\frac{\left(\frac{\pi^2}{\AGM^2}\cdot 2^{n+2}\right)\cdot \frac{c_{n+1}^4}{16\cdot \AGM^2}}{\left(\pi\cdot2^{n+2} - \frac{\pi}{\AGM^2}\right)^2\cdot c_{n+1}^4}\nonumber\\
    &= \frac{\left(\frac{1}{\AGM^2}\cdot 2^{n+2}\right)\cdot \frac{1}{16\cdot \AGM^2}}{\left(2^{n+2} - \frac{1}{\AGM^2}\right)^2}
    = \frac{2^{n-2}}{\left(2^{n+2}\cdot\AGM^2-1\right)^2}
\end{align}
\en{Here we use $\AGM>b_1$ (Prop.~\ref{\en{EN}Thm1}) and $\AGM(1,1/\sqrt{2})>\sqrt{1\cdot 1/\sqrt{2}}=\sqrt[4]{1/2}$ and $2^n\geq 1$ to obtain:}%
\de{Hier nutzen wir $\AGM>b_1$ (Satz \ref{\en{EN}Thm1}) bzw. $\AGM(1,1/\sqrt{2})>\sqrt{1\cdot 1/\sqrt{2}}=\sqrt[4]{1/2}$ und $2^n\geq 1$ und erhalten:}
\begin{align*}
    \frac{|\pi-p_{n+1}|}{|\pi-p_{n}|^2} &< \frac{2^{n-2}}{\left(2^{n+2}\cdot\AGM^2-1\right)^2} < \frac{2^{n-2}}{\left(2^{n+2}\cdot\sqrt{1/2}-2^n\right)^2}\\
    &= \frac{2^{-n}}{4\cdot\left(4\cdot\sqrt{1/2}-1\right)^2} < 0\ko075\cdot 2^{-n} < 0\ko075
\end{align*}
\en{This proves the quadratic convergence of $p_n$.}%
\de{Somit ist die quadratische Konvergenz von $p_n$ gegen $\pi$ bewiesen.}
\end{proof}

\begin{bem}
\en{Using $p_1>3\ko14057$, Thm.~\ref{\en{EN}theo2} yields $|\pi-p_{48}|<10^{-5\ko7\cdot10^{14}}$.
Thus $p_{48}=\widehat{\pi}_{48}=\pi_{24}$ is closer to $\pi$ than the current world record of $3\cdot 10^{14}$ digits (May 2025).}%
\de{Mit $p_1>3\ko14057$ folgt aus Thm.~\ref{\en{EN}theo2}: $|\pi-p_{48}|<10^{-5\ko7\cdot10^{14}}$.
Insbesondere liegt $p_{48}=\widehat{\pi}_{48}=\pi_{24}$ näher an $\pi$ als der aktuelle Rekord von $3\cdot 10^{14}$ Dezimalen (Mai 2025).}
\end{bem}

\begin{bem}
\en{For $p_{48}$, it even holds $|\pi-p_{48}|<10^{-1\ko7\cdot10^{15}}$. This follows from the stronger error bound $0 < \pi-p_n < \left(2^{n+4}\pi^2-8\pi\right)\cdot\exp\lk-2^{n+1}\pi\rk$ which is proven in \cite[eq.~(20)]{Brent2018}. }%
\de{Tatsächlich gilt sogar $|\pi-p_{48}|<10^{-1\ko7\cdot10^{15}}$, was aus der in \cite[Glg.~(20)]{Brent2018} bewiesenen Fehlerabschätzung von $0 < \pi-p_n < \left(2^{n+4}\pi^2-8\pi\right)\cdot\exp\lk-2^{n+1}\pi\rk$ folgt.}
\end{bem}

\vfill
\hrule
\fancyhead[OL]{\emph{\en{References}\de{Literatur}}}
\label{\en{EN}literat}\bibliographystyle{plain}

\begin{thebibliography}{10}
\medskip
\bibitem{\en{EN}Brent2018}
Richard~P. Brent.
\newblock The Borwein brothers, $\pi$ and the AGM, 2018.
\newblock \url{https://arxiv.org/abs/1802.07558}.
\medskip

\bibitem{\en{EN}Guillera}
Jesus Guillera.
\newblock Easy Proofs of Some Borwein Algorithms for $\pi$, 2008.
\newblock \url{https://arxiv.org/abs/0803.0991}.
\medskip

\bibitem{\en{EN}lord_1992}
Nick Lord.
\newblock Recent Calculations of $\pi$: the {Gauss-Salamin} Algorithm.
\newblock {\em The Mathematical Gazette}, 76(476):231–242, 1992.
\newblock \url{https://doi.org/10.2307/3619132}.


\end{thebibliography}
{\raggedright
\renewcommand{\refname}{\en{References}\de{Literatur}}

}

  \vfill\pagebreak
  \invisiblepart{Deutsche Version}
  \enfalse

\end{document}